\renewcommand{\@algocf@capt@plain}{above}
\newtheorem{theorem}{Theorem}[section]
\newtheorem*{remark}{Remark}
\title{\Large \textbf{
Voltage positioning using co-optimization of controllable grid assets in radial networks}
}
\author{Nawaf Nazir, \textit{Student Member, IEEE} and Mads Almassalkhi \textit{Senior Member, IEEE} 
\thanks{N. Nazir and M. Almassalkhi are with the Department of Electrical and Biomedical Engineering, University of Vermont, Burlington, Vermont, USA {\tt\small \{mnazir,malmassa\}@uvm.edu} Support from U.S. Department of Energy award number DE-EE0008006 is gratefully acknowledged.}%
}
\begin{document}

\maketitle
\thispagestyle{empty}
\pagestyle{empty}

\begin{abstract}
With increasing penetration of solar PV, some distribution feeders are experiencing highly variable net-load flows and even reverse flows.
To optimize distribution systems under such conditions, the scheduling of mechanical devices, such as OLTCs and capacitor banks, needs to take into account forecasted solar PV and actual grid conditions. 
However, these legacy switching assets are operated on a daily or hourly timescale, due to the wear and tear associated with mechanical switching, which makes them unsuitable for real-time control.
Therefore, there is a natural timescale-separation between these slower mechanical assets and the responsive nature of inverter-based resources. In this paper, we present a network admissible convex formulation for holistically scheduling controllable grid assets to position voltage optimally against solar PV. An optimal hourly schedule is presented that utilizes mechanical resources 
to position the predicted voltages close to nominal values, while minimizing the use of inverter-based resources (i.e., DERs), making them available for control at a faster time-scale (after the uncertainty reveals itself). A convex, inner approximation of the OPF problem is adapted to a mixed-integer linear program that minimizes voltage deviations from nominal (i.e., maximizes voltage margins). The resulting OPF solution respects all the network constraints and is, hence, robust against modeling simplifications.
Simulation based analysis on 
IEEE distribution feeders validates the approach.
\end{abstract}
\begin{IEEEkeywords}
Voltage positioning, mechanical switching devices, holistic scheduling, network admissible formulation.
\end{IEEEkeywords}

\section{Introduction}\label{sec:introduction}
With the increasing penetration of renewable resources in the distribution grid, maintaining system voltages within acceptable limits (i.e., minimizing voltage deviations), is a major challenge~\cite{driesen2006distributed,bravo2015distributed}. The intermittent nature of solar energy can cause under and over-voltages in the system~\cite{stetz2013improved,NERC2017,bayer2018german,guangya2015voltage} leading to unacceptable operation. However, solar PV resources are inverter interfaced and can provide responsive reactive power resources, which can be used in active network management~\cite{basso2004ieee}. Besides these inverter-interfaced resources, the distribution grid also includes traditional mechanical devices, such as on-load tap changing (OLTC) transformers, cap banks, reactors, etc. These discrete mechanical assets are subject to physical wear and tear and, thus, are usually only operated a few times during the day with heuristic open-loop policies~\cite{sanghi2003chemistry}. However, with increasing solar PV penetration, it becomes important to optimize the schedule for the mechanical assets against bidirectional and variable power flows~\cite{kersulis2016renewable}. However, the mechanical switching is not suitable for real-time conditions and control and should, therefore, be utilized on slower timescales to position the predicted voltage profile (i.e., increase voltage margins) against predicted solar PV generation. In fact, inverter-interfaced assets, such as solar PV generation and battery storage, can effectively supply controllable reactive resources appropriate for these faster time-scales. Therefore, there is a natural timescale-separation between (slow) mechanical and (fast) inverter-based controllable grid assets. DER resources on slow time-scale act as a form of reactive reserve, allowing the DERs to fully participate in valuable market services on a fast timescale. This way mechanical assets maximize margins and optimize value of DERs. This leads to the challenge of co-optimization of different types of controllable reactive power resources. 
Thus, for the scheduling on slower time-scales, it is  desirable to maximize utilization of the mechanical assets to keep voltages close to desirable nominal values while using as little as possible of the responsive inverter-interfaced reactive resources. This effectively prioritizes the responsive reactive resources for the faster time-scales to counter variability in net-load (demand minus solar PV). 

The aim of this paper is then to present a convex OPF formulation where the objective function seeks to minimize the deviation of the predicted nodal voltages from their nominal values. The nonlinear power flow  equations  relate  the  voltages in  the network with the complex power injections. 
Traditionally, for distribution system ACOPF formulations, the nonlinear \textit{DistFlow} model is used,  which considers a branch flow model~\cite{baran1989optimal}. Recently,  convex relaxation techniques have been developed to formulate and solve the OPF problem to global optimality~\cite{lavaei2012zero,lavaei2014geometry}.  These convex relaxations provide a lower bound on the globally optimal AC solution. Several works in literature such as~\cite{gan2015exact} have shown that under some conditions these relaxations can be exact and the solution of the relaxed convex problem is the global optimum of the original AC OPF problem. However, these conditions fail to hold under reverse power flows from extreme solar PV, which engenders a non-zero duality gap and a non-physical solution~\cite{huang2017sufficient}.
Further simplications from convex models lead to linearized AC power flows, which have also been shown to be accurate in certain applications. Of particular interest to distribution system OPF is an extension of the \textit{LinDistFlow} model to an unbalanced linearized load flow model, \textit{Dist3Flow}, that is obtained by linearization and certain assumptions on the per-phase imbalances~\cite{sankur2016linearized,robbins2016optimal}. However, solving linearized OPF problems, even though computationally efficient, do not provide guarantees on feasibility or bounds on optimality with respect to the original nonlinear formulation. This paper uses a convex approximation of the power flow equations that results in a network-admissible solution, i.e., all physical network limits are respected at (global) optimality, while solving in polynominal time. Hence, the method is robust against modeling errors introduced from approximations of the non-linear power flow equations.

Discrete devices like the capacitor/reactor banks and line regulators (ON/OFF)  and  load-tap-changing (LTC) transformers are  an  integral  part  of  distribution  system operations. Due to the discrete nature of these devices, including them into an  optimization  problem renders  the  problem  NP-hard~\cite{Zhu2016optimal}. To incorporate discrete devices into convex OPF formulations, the McCormick relaxations~\cite{mccormick1976computability} and linearization techniques have  been  used  to incorporate  these  devices~\cite{nazir2018receding,briglia2017distribution}. In~\cite{Zhu2016optimal}, the authors use SDPs (semi-definite programs) to capture the transformer ratios and then the solutions are rounded to the nearest discrete tap values, whereas in~\cite{yang2017optimal} the load tap changers and shunt capacitors are both modeled by linear constraints using discrete variables, facilitating the linearly constrained mixed-integer formulation. However, this rounding can cause infeasibility issues, which are analyzed in~\cite{shukla2019} and the authors provide an MISOCP formulation, which is computationally tractable and converges to a feasible optimal solution. This paper builds upon these works, but leverages the notion that discrete devices and continuous resources  can  offer  their  flexibility  at  different  time-scales, which gives rise to a natural prioritization of reactive power resources.

This paper focuses on optimizing discrete control assets in the grid to maximize both the voltage margins and the availability of reactive reserves for the faster timescales. This maximization of voltage margins\footnote{Minimizing voltage deviations from nominal can be viewed as maximizing voltage margins.} is illustrated in Fig.~\ref{fig:volt_gradient} which depicts larger voltage margins as we move closer to the nominal. Recent works in literature such as~\cite{bolognani2015distributed,zhou2017incentive,baker2018network}    have developed control schemes that achieve voltage regulation through dispatch of flexible resources in real-time. The work in this paper could provide prediction schedules for voltage control at faster timescales.

\begin{figure}[h]
\centering
\includegraphics[width=0.4\textwidth]{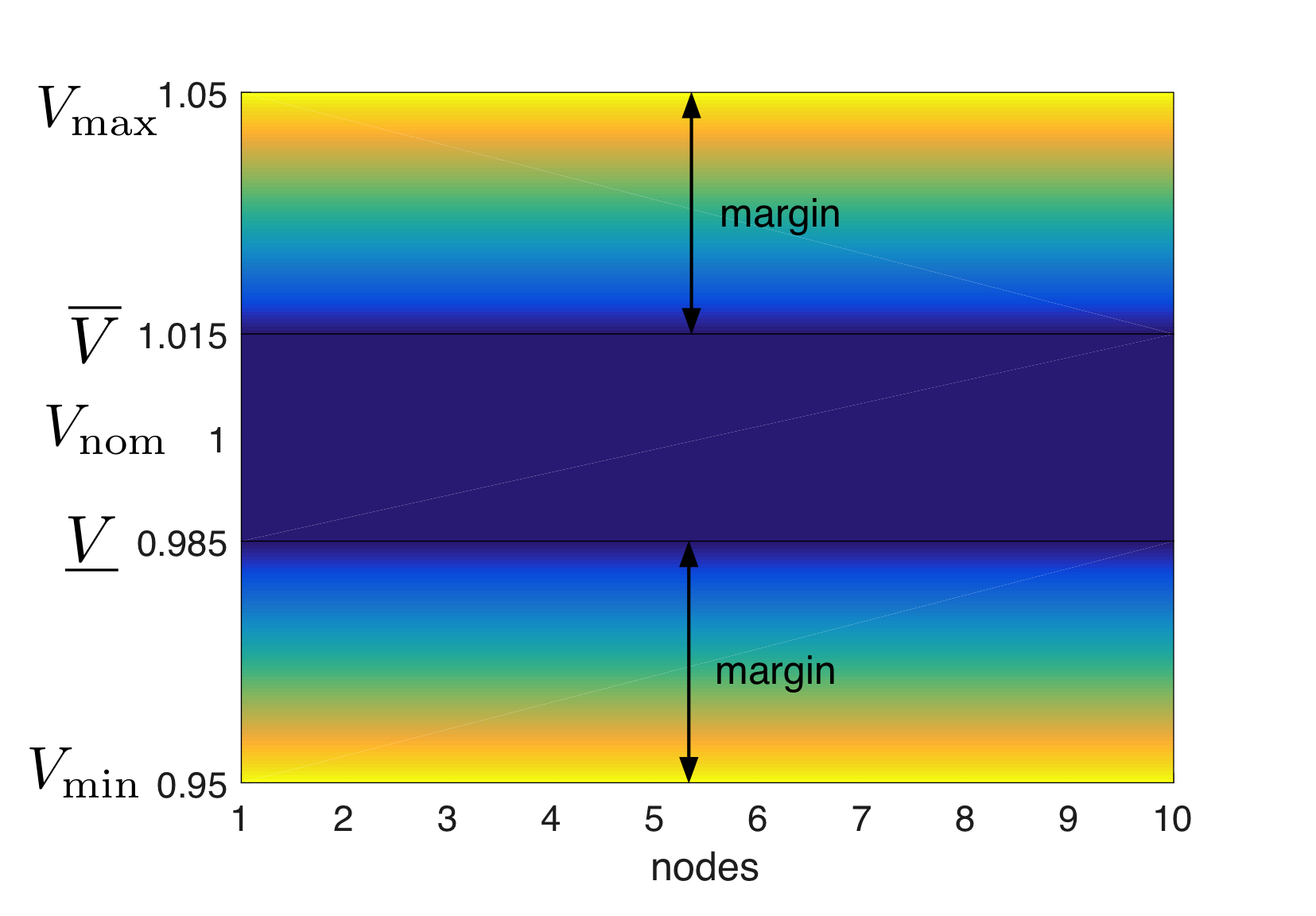}
\caption{\label{fig:volt_gradient}Illustrating the relationship between voltage margin, nominal voltage and voltage bounds. Lighter colors represent larger margins.}
\end{figure}


In general, employing convex relaxations with an objective that minimizes voltage deviations will lead to a non-zero duality gap~\cite{li2017non}, due to being non-monotonic. More general conditions for the exactness of the convex relaxation are shown in~\cite{li2018convex,gan2015exact}. This makes it challenging to use convex relaxations to formulate the optimization problem. In many applications, providing network admissibility guarantees are more valuable than solving to a globally optimal solution~\cite{Dan_convex_rest}. This paper uses the convex inner approximation method of the OPF problem that exhibits computational solve times similar to that of linear formulations with the added and crucial benefit that the formulation guarantees admissible solutions.  Furthermore, the utilization of reactive power from flexible inverter-interfaced DERs should be minimized, so that this resource can be better utilized at the faster time-scale. Previous work on minimizing both voltage deviations and reactive power use has been  shown in~\cite{wu2017distributed}, where a trade-off parameter is used between the two competing objectives. Unlike~\cite{wu2017distributed}, this paper considers control scheme for the integration of existing discrete mechanical assets and flexible inverters and provides a systematic method to select the trade-off parameter. 
To summarize, main contributions of this paper are the following:
\begin{itemize}
\item A convex inner OPF formulation is developed for the problem of minimizing voltage deviations from nominal  in a distribution system with guarantees on admissiblility and scalability.

\item A voltage positioning optimization (VPO) method is developed that holistically optimizes the schedule of discrete mechanical assets while systematically minimizing the need for continuous inverter-interfaced reactive DERs. 
\item Simulation based analysis is leveraged to select trade-off parameters between the use of continuous reactive resources and voltage margins, which are then utilized to validate the performance on IEEE test feeders.
\end{itemize}

The rest of the paper is organized as follows:
Section~\ref{sec:VPP} develops the voltage positioning optimization (VPO) formulation to include discrete mechanical assets as a mixed integer program in order to position the nodal voltages. Section~\ref{sec:Opt_form} develops the mathematical formulation of the convex inner approximation OPF problem that is then used in the VPO problem to obtain a MILP based VPO.  Simulation results on IEEE test feeders are discussed in Section~\ref{sec:sim_results} and finally conclusions and future directions are summarized in Section~\ref{sec:conclusion}.

\section{Voltage Positioning Optimization}\label{sec:VPP}

This section develops the voltage positioning optimization problem as a mixed integer program (MIP). The nonlinearity associated with modeling discrete mechanical devices, such as On-load tap changers (OLTCs) and capacitor banks (CBs), is expressed with an equivalent piece-wise linear formulation to engender the MIP.

\subsection{Distribution Grid Model}\label{sec:math_model}
 Let $\mathbb{R}$ be the set of real numbers, $\mathbb{Z}$ be the set of integers and $\mathbb{N}$ be the set of natural numbers. Consider a radial balanced distribution network, shown in Fig.~\ref{fig:radial_network}, as a graph $\mathcal{G}=\{\mathcal{N},\mathcal{E}\}$, where $\mathcal{N}$ is the set of nodes and $\mathcal{E}$ is the set of branches, such that $(i,j)\in \mathcal{E}$, if nodes $i,j\in \mathcal{N}$ are connected, and $|\mathcal{E}|=n, \ |\mathcal{N}|=n+1$. Node $0$ which is assumed to be the substation node with a fixed voltage $V_0$ and define $\mathcal{N^+}:=\mathcal{N}\setminus \{0\}$. Let $B\in \mathbb{R}^{(n+1)\times n}$ be the \textit{incidence matrix} of the undirected graph $\mathcal{G}$ relating the branches in $\mathcal{E}$ to the nodes in $\mathcal{N}$, such that the entry at $(i,j)$ of $B$ is $1$ if the $i$-th node is connected to the $j$-th branch and otherwise $0$. If $V_i \in \mathbb{C}$ and $V_j\in \mathbb{C}$ are the voltage phasors at nodes $i$ and $j$ and $I_{ij} \in \mathbb{C}$ is the current phasor in branch $(i,j)\in \mathcal{E}$, then $v_i:=|V_i|^2$, $v_j:=|V_j|^2$ and $l_{ij}:=|I_{ij}|^2$. Let $P_{ij}$ be the sending end active power flow from node $i$ to $j$, $Q_{ij}$ be the sending end reactive power flow from node $i$ to $j$, $p_i$ be the active power injection and $q_i$ be the reactive power injection, into node $i \in \mathcal{N^+}$, $r_{ij}$ and $x_{ij}$ be the resistance and reactance of the branch $(i,j)\in \mathcal{E}$ and $z_{ij}=r_{ij}+\mathbf{j}x_{ij}$ be the impedance. Then, for the radial distribution network, the relation between node voltages and power flows is given by the  \textit{DistFlow} equations:
\vspace{2mm}
\begin{align}
v_i=&v_j+2r_{ij}P_{ij}+2x_{ij}Q_{ij}-|z_{ij}|^2l_{ij} \quad \forall (i,j) \in \mathcal{E} \label{eq:volt_rel}\\
P_{ij}=&p_i+\sum_{h:h\rightarrow i}(P_{hi}-r_{hi}l_{hi}) \quad \forall (i,j) \in \mathcal{E} \label{eq:real_power_rel}\\
Q_{ij}=&q_i+\sum_{h:h\rightarrow i}(Q_{hi}-x_{hi}l_{hi}) \quad \forall (i,j) \in \mathcal{E} \label{eq:reac_power_rel}\\
l_{ij}v_i=&P_{ij}^2+Q_{ij}^2 \quad \forall (i,j)\in \mathcal{E} \label{eq:curr_rel1}
\end{align}

\begin{figure}[h]
\centering
\includegraphics[width=0.42\textwidth]{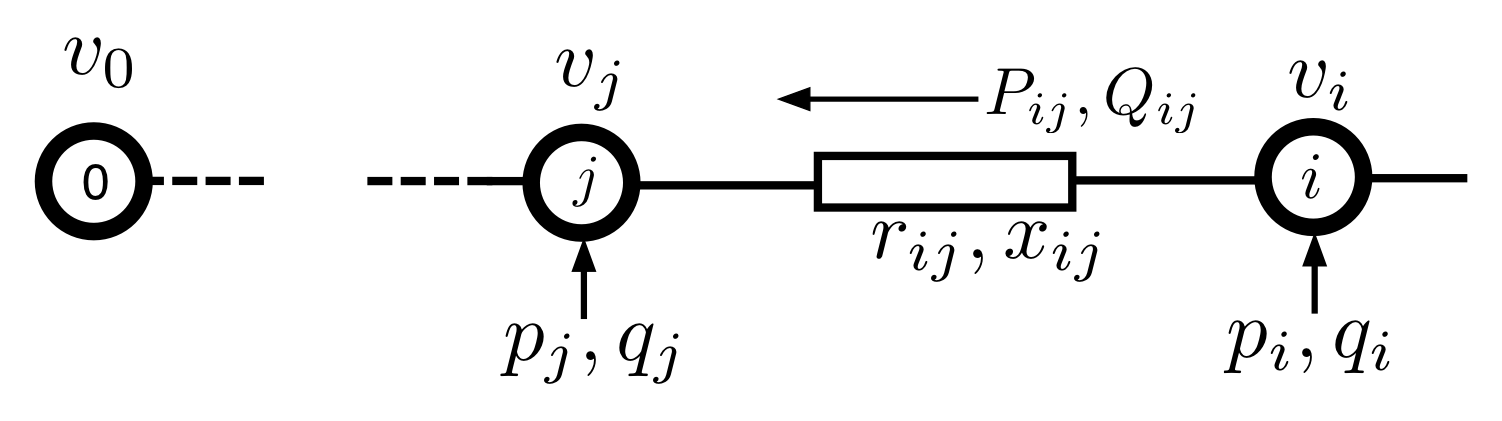}
\caption{\label{fig:radial_network} Diagram of a radial distribution network from~\cite{heidari2017non}.}
\vspace{-15pt}
\end{figure}

Clearly, the line losses in~\eqref{eq:curr_rel1} are nonlinear, and since it is an equality constraint, this makes the \textit{DistFlow} model non convex. In the remainder of this section, we develop a mathematical model of the radial network that expresses the constrained variables as a linear function of the power injections and the branch currents. Through this approach, we are able to separate the model into linear and nonlinear components. This will form the basis for our approach to bound the nonlinear terms, which will then provide the convex inner approximation of the power flow equations.

From the incidence matrix $\mathbf{B}$ of the radial network and following the method adopted in~\cite{heidari2017non}, \eqref{eq:real_power_rel} and \eqref{eq:reac_power_rel} can be expressed through the following matrix equations:
\begin{align}
    \mathbf{P}=\mathbf{p}+\mathbf{AP}-\mathbf{ARl} \qquad \mathbf{Q}=\mathbf{q}+\mathbf{AQ}-\mathbf{AXl}, \label{eq:P_matrix} 
\end{align}
where $\mathbf{P}=[P_{ij}]_{(i,j)\in \mathcal{E}}$, $\mathbf{Q}=[Q_{ij}]_{(i,j)\in \mathcal{E}}$, $\mathbf{p}=[p_i]_{i\in \mathcal{N^+}}$, $\mathbf{q}=[q_i]_{i \in \mathcal{N^+}}$, $\mathbf{R}=\text{diag}\{r_{ij}\}_{(i,j)\in \mathcal{E}}$, $\mathbf{X}=\text{diag}\{x_{ij}\}_{(i,j)\in \mathcal{E}}$, $\mathbf{l}=[l_{ij}]_{(i,j)\in \mathcal{E}}$ and $\mathbf{A}=[\mathbf{0}_n \quad \mathbf{I}_n]\mathbf{B}-\mathbf{I}_n$, where $\mathbf{I}_n$ is the $n\times n$ identity matrix and $\mathbf{0}_n$ is a column vector of $n$ rows.

Defining  $\mathbf{C}=(\mathbf{I}_n-\mathbf{A})^{-1}$, $\mathbf{D_{\text{R}}}=(\mathbf{I}_n-\mathbf{A})^{-1}\mathbf{AR}$, and $\mathbf{D_{\text{X}}}=(\mathbf{I}_n-\mathbf{A})^{-1}\mathbf{AX}$, allows us to simplify \eqref{eq:P_matrix} to:
\begin{align} \label{eq:P_relation}
    \mathbf{P}=\mathbf{Cp}-\mathbf{D_{\text{R}}l} \qquad \mathbf{Q}=\mathbf{Cq}-\mathbf{D_{\text{X}}l},
\end{align}

\begin{remark}
The matrix $(\mathbf{I}_n-\mathbf{A})$ is nonsingular since $\mathbf{I}_n-\mathbf{A}=2\mathbf{I}_n-[\mathbf{0}_n \quad \mathbf{I}_n]\mathbf{B}=2\mathbf{I}_n-\mathbf{B}_n$, where $\mathbf{B}_n:=[\mathbf{0}_n \quad \mathbf{I}_n]\mathbf{B}$ is the $n\times n$ matrix obtained by removing the first row of $\mathbf{B}$. For a radial network, the vertices and edges can always be ordered in such a way that $\mathbf{B}$ and $\mathbf{B}_n$ are upper triangular with $\text{diag}(\mathbf{B}_n) = \mathbf{1}_n$, which implies that $2\mathbf{I}_n-\mathbf{B}_n$ is also upper triangular and $\text{diag}(2\mathbf{I}_n-\mathbf{B}_n)=\mathbf{1}_n$. Thus,   $\det(2\mathbf{I}_n-\mathbf{B}^{'})=1>0$ and $\mathbf{I}_n-\mathbf{A}$ is  non-singular.
\end{remark}

Similarly, \eqref{eq:volt_rel} can be applied recursively to the distribution network in Fig.~\ref{fig:radial_network} to get the matrix equation:
\begin{align}\label{eq:volt_matrix_rel}
    [v_i-v_j]_{(i,j)\in \mathcal{E}}=2(\mathbf{RP}+\mathbf{XQ})-\mathbf{Z}^2\mathbf{l}
\end{align}
where $\mathbf{Z}^2:=\text{diag}\{z_{ij}^2\}_{(i,j)\in \mathcal{E}}$. Based on the incidence matrix $\mathbf{B}$, the left hand side of \eqref{eq:volt_matrix_rel} can be formulated in terms of the fixed head node voltage as:
\begin{align}\label{eq:volt_transform}
    \mathbf{C}^\top [v_i-v_j]_{(i,j)\in \mathcal{E}}=\mathbf{V}-v_{\text{0}} \mathbf{1}_n
\end{align}
where $\mathbf{V}:=[v_i]_{i\in \mathcal{N^+}}$. Based on \eqref{eq:volt_transform}, \eqref{eq:volt_matrix_rel} can be expressed as:
\begin{align}\label{eq:volt_matrix_2}
    \mathbf{V}=v_{\text{0}} \mathbf{1}_n+2(\mathbf{C}^\top \mathbf{RP}+\mathbf{C}^\top \mathbf{XQ})-\mathbf{C}^\top \mathbf{Z}^2\mathbf{l}
\end{align}
Substituting \eqref{eq:P_relation} into \eqref{eq:volt_matrix_2}, we obtain a compact relation between voltage and power injections shown below.
\begin{align}\label{eq:final_volt_rel1}
    \mathbf{V}=v_{\text{0}}\mathbf{1}_n+\mathbf{M_{\text{p}}p}+\mathbf{M_{\text{q}}q}-\mathbf{Hl}
\end{align}
where $\mathbf{M_{\text{p}}}=2\mathbf{C}^\top \mathbf{RC}$, \quad $\mathbf{M_{\text{q}}}=2\mathbf{C}^\top \mathbf{XC}$ and \newline $\mathbf{H}=\mathbf{C}^\top (2(\mathbf{RD_{\text{R}}}+\mathbf{XD_{\text{X}}})+\mathbf{Z}^2)$
\begin{remark}
The matrix $\mathbf{H}$ is non-negative, when the underlying distribution network is either inductive ($\mathbf{X}$ is non-negative), capacitive ($\mathbf{X}$ is non-positive) or purely resistive ($\mathbf{X}$ is zero matrix). This fact helps in obtaining the convex inner approximation described later in the paper in Section~III. Substituting the values of $\mathbf{C}$, $\mathbf{D_{\text{R}}}$ and $\mathbf{D_{\text{X}}}$ into the expression of $\mathbf{H}$, gives:
    $$\mathbf{H}=(\mathbf{I}_n-\mathbf{A})^{-\top}[2(\mathbf{R}(\mathbf{I}_n-\mathbf{A})^{-1}\mathbf{AR}+\mathbf{X}(\mathbf{I}_n-\mathbf{A})^{-1}\mathbf{AX})+\mathbf{Z}^2].$$ To show $\mathbf{H}$ is non-negative, we just need to focus on $\mathbf{A}$ and $(\mathbf{I}_n-\mathbf{A})^{-1}$. Due to the definition, $\mathbf{A}$ is non-negative and $\mathbf{I}_n-\mathbf{A}$ has positive diagonal entries and non-positive off-diagonal entries and is, hence, a Z-matrix. Also, $\mathbf{I}_n-\mathbf{A} = 2\mathbf{I}_n - \mathbf{B}_n$ and $2\mathbf{I}_n - \mathbf{B}_n$ is an upper triangular matrix.  Hence its eigenvalues are positive, so it is also a non-singular M-matrix (i.e., a Z-matrix whose eigenvalues have non-negative real part). Non-singular M-matrices are a subset of a class of inverse-positive matrices, i.e., matrices with inverses belonging to the class of non-negative matrices (all the elements are either equal to or greater than zeros)~\cite[Corollary 3.2]{fujimoto2004two}.
  Hence, $(\mathbf{I}_n-\mathbf{A})^{-1}$ is a non-negative matrix. As $\mathbf{A}$ is also a non-negative matrix, then $\mathbf{H}$ is clearly non-negative whenever matrix $\mathbf{R}$ is non-negative and either $\mathbf{X}$ is non-negative (i.e., all lines are inductive), $\mathbf{X}$ is non-positive (i.e., all lines are capacitive) or $\mathbf{X}$ is zero (i.e., all lines are purely resistive).
\end{remark}

Apart from the nonlinear relation \eqref{eq:curr_rel1} of $\mathbf{l}$ to $\mathbf{P}$, $\mathbf{Q}$ and $\mathbf{V}$, \eqref{eq:P_relation} and \eqref{eq:final_volt_rel1} is a linear relationship between the nodal power injections $\mathbf{p}$, $\mathbf{q}$, the branch power flows $\mathbf{P}$ ,$\mathbf{Q}$ and node voltages $\mathbf{V}$. The nonlinearity in the network is represented by~\eqref{eq:curr_rel1}, as the current term $\mathbf{l}$ is related to the power injections and node voltages in a nonlinear fashion. Including this term into the optimization model would render the optimization problem non-convex, however, neglecting this term could result in an inadmissible linear OPF solution. In the next section, we model the discrete grid resources such as OLTCs and capacitor banks, which will then be used to formulate the voltage positioning optimiazation problem.

\subsection{Discrete device nomenclature}\label{sec:notation}
Consider the distribution grid defined in Section~\ref{sec:math_model}, where 
$\mathcal{D, C}\subseteq \mathcal{N^+}$ represent the sets of nodes with DERs and capacitor banks, respectively, and $\mathcal{T}\subseteq \mathcal{E}$ is the set of branches with on-load tap change transformers (OLTC) or voltage regulators.
The tap-ratio for the OLTC/regulator at branch $m\in \mathcal{T}$ is denoted by  $t_{m}$ with the tap position defined by $n_m^{\text{tr}} \in \mathbb{Z}$, e.g. $n_m^{\text{tr}}\in \{-16,\hdots, 0,\hdots, +16\}$. The number of capacitor bank units is $n_i^{\text{cp}}\in \mathbb{Z}$ at node $i \in \mathcal{C}$.
Denote $q_{\text{g},i}$ as the controllable DER reactive power injection at node $i\in \mathcal{D}$ and $b_{i}$ as the capacitor bank admittance at node $i\in \mathcal{C}$.

\subsection{Voltage positioning optimization formulation}\label{math_formulation}

	The focus of this work is to maximize both the voltage margins and the availability of reactive reserves for the faster timescales i.e., position voltages within tighter bounds $\mathbf{\underline{V}}$ and $\mathbf{\overline{V}}$, and prioritize the use of mechanical (discrete) assets over more flexible reactive resources, $\mathbf{q_{\text{g}}}$. Hence the objective minimizes a function of $\mathbf{q_{\text{g}}}$ and the voltage deviation terms for the upper and lower bounds, $\mathbf{V_{\text{v}}^+}$ and $\mathbf{V_{\text{v}}^-}$, respectively. The VPO formulation is described next.

Given a radial, balanced, and single-phase equivalent representation of a distribution feeder, denote the VPO problem as (P1), which is expressed as a mixed-integer nonlinear program (MINLP) as follows:
\begin{subequations}\label{eq:P1}
\begin{align}
&\textbf{(P1)}& &\min_{q_{\text{g},i},n_m^{\text{tr}},n_i^{\text{cp}}} \sum_{i=1}^nq_{\text{g},i}^2+\alpha_i(V^+_{\text{v},i}+V^-_{\text{v},i})& \label{eq:P1_obj}
\end{align}
\text{subject to: }
\begin{align}
     \mathbf{V}=&v_{\text{0}}\mathbf{1}_n+\mathbf{M_{\text{p}}p}+\mathbf{M_{\text{q}}q}-\mathbf{Hl}\label{eq:final_volt_rel}\\
 l_{ij}v_i=&P_{ij}^2+Q_{ij}^2  \qquad \forall (i,j)\in \mathcal{E} \label{eq:curr_rel}\\
\mathbf{q}=&\mathbf{q_{\text{g}}}-\mathbf{Q_{\text{L}}}+\mathbf{Q^{\text{cp}}}\label{eq:P1_q_rel}\\
 0=& v_i - t_{m}^2 v_j \quad\forall m \in \mathcal{T} \label{eq:P1_xmer_relation} \\
  0=& t_m - (1+\tau_mn_m^{\text{tr}}) \quad \forall m\in \mathcal{T}\label{eq:trans_tap_rel}\\
  0= & Q_i^{\text{cp}}-v_ib_i \quad\forall i\in \mathcal{C}\label{eq:P1_cap_volt_rel}\\
  0=& b_i - y_{\text{c},i}n_i^{\text{cp}} \quad \forall i\in \mathcal{C}\label{eq:P1_cap_num_rel}\\
    \mathbf{V_{\text{min}}} \leq & \mathbf{V} \leq \mathbf{V_{\text{max}}}\label{eq:P1_vbound}\\
    \mathbf{\underline{V}}-\mathbf{V^-_{\text{v}}}\leq & \mathbf{V} \leq \mathbf{\overline{V}}+\mathbf{V^+_{\text{v}}}\label{eq:P1_sl}\\
    \mathbf{V^+_{\text{v}}}\ge & 0,\,\, \mathbf{V^-_{\text{v}}}\ge 0\label{eq:P1_sl_pos}\\
    \underline{q_{\text{g},i}}\leq & q_{\text{g},i}\leq \overline{q_{\text{g},i}} \quad\forall i\in \mathcal{D} \label{eq:P1_qbound} \\
    \underline{n_m^{\text{tr}}}\leq & n_m^{\text{tr}}\leq \overline{n_m^{\text{tr}}} \quad \forall m \in \mathcal{T}\label{eq:P1_xmer_bound} \\
    \underline{n_i^{\text{cp}}}\leq & n_i^{\text{cp}}\leq \overline{n_i^{\text{cp}}} \quad \forall i \in \mathcal{C}\label{eq:P1_cap_bound}\\
     n_m^{\text{tr}},\,\,& n_i^{\text{cp}} \in \mathbb{Z} \quad \forall m\in \mathcal{T}, \forall i\in \mathcal{C} \label{eq:P1_int_constraint}
\end{align}
\end{subequations}
where $q_{\text{g},i}$ is the DER reactive power generation at node $i$ and $V^+_{\text{v},i}$ and $V^-_{\text{v},i}$ represents the voltage violation terms for the upper and lower bound respectively, at node $i$. The parameter $\alpha$ is chosen to trade-off between the use of flexible reactive resources and maximizing voltage margins. The equality constraints \eqref{eq:final_volt_rel} and \eqref{eq:curr_rel} represent the power flow equations relating the voltages and currents in the network to the power injections, whereas  \eqref{eq:P1_q_rel} represents the nodal reactive power balance, with $\mathbf{Q_{\text{L}}}$ being the reactive net-demand and $\mathbf{p}=-\mathbf{P_{\text{L}}}$ with $\mathbf{P_{\text{L}}}$ being the active net-demand. The constraints \eqref{eq:P1_xmer_relation} and \eqref{eq:trans_tap_rel} define the relation between the tap ratio and the tap position with $\tau_m \in \mathbb{R}$ being the tap step, whereas the limits in~\eqref{eq:P1_xmer_bound} define bounds on OLTC tap position with $\underline{n_m^{\text{tr}}}$ and $\overline{n_m^{\text{tr}}}$ being the lower and upper tap position limit. The relation between capacitor bank admittance ($b_i$) and reactive power injected by capacitor banks ($Q_i^{\text{cp}}$) is given by~\eqref{eq:P1_cap_volt_rel}, whereas the relation between capacitor bank admittance and number of capacitor bank units with $y_{\text{c},i} \in \mathbb{R}$ being the admittance of a single capacitor bank unit is given by~\eqref{eq:P1_cap_num_rel} and \eqref{eq:P1_cap_bound} gives bounds on the capacitor bank units with $\underline{n_i^{\text{cp}}}$ and $\overline{n_i^{\text{cp}}}$ being the lower and upper bound on number of capacitor bank units. The box constraints in~\eqref{eq:P1_vbound} are the network voltage limits with $\mathbf{V_{\text{min}}}$ and $\mathbf{V_{\text{max}}}$ being the lower and upper network voltage limit. The constraint in~\eqref{eq:P1_sl} represents the tighter voltage bound constraints that seek to position the voltage close to nominal using the tighter inner voltage bounds $\mathbf{\underline{V}}$ and $\mathbf{\overline{V}}$. This ensures that the reactive power resources are utilized to position the voltage within the tighter voltage bounds. The box constraints~\eqref{eq:P1_qbound} represents the DER reactive power generation limits for each generator node with $\underline{q_{\text{g},i}}$ and $\overline{q_{\text{g},i}}$ being the lower and upper limit on generation and finally, \eqref{eq:P1_int_constraint} constrains the transformer tap positions and the number of capacitor bank units to be discrete set of integers.

(P1) represents the VPO problem for a radial distribution network. Note that nonlinear equality constraints~\eqref{eq:curr_rel}, \eqref{eq:P1_xmer_relation} and \eqref{eq:P1_cap_volt_rel} and the integer constraint
 \eqref{eq:P1_int_constraint} represent non-convex constraints and make the OPF problem NP-hard. The nonlinearity related to the transformer taps and the bilinear term for the capacitor banks are approximated with piecewise linear (PWL) constraints as shown in the next section, whereas the nonlinearity due to the powerflow equations represented by~\eqref{eq:curr_rel} is dealt with through convex inner approximation illustrated in section~\ref{sec:Opt_form}

\subsection{OLTC and capacitor bank modeling}\label{sec:Xmer_modeling}
The voltage relation between the nodes across an OLTC is given by~\eqref{eq:P1_xmer_relation} and~\eqref{eq:trans_tap_rel}.
Note that the equality constraint~\eqref{eq:P1_xmer_relation} represents a non-convex constraint and makes the OPF problem NP-hard. The nonlinearity related to the OLTC taps is approximated with piecewise linear (PWL) constraints in~\eqref{eq:xmer_pw2}-\eqref{eq:xmer_pw5} to obtain an accurate representation as described in~\cite{yang2017optimal} and summarized next.
The coupling between $v_i, v_j, t_{m}$ can be expressed as:
\begin{equation}\label{eq:xmer_pw1}
    v_i=t_{m}^2v_j \approx t_{m,0}^2v_j+\sum_{p=1}^{n_m^{\text{tr}}-\underline{n_m^{\text{tr}}}+1}\Delta t_{m,p}v_j,
\end{equation}
where $ \Delta t_{m,p}=t_{m,p}^2-t_{m,p-1}^2$, $\{t_{m,0},t_{m,1},t_{m,2},\hdots,t_{m,K}\}$ represent the fixed tap ratio settings of the OLTC connected at branch $m$ and $n_m^{\text{tr}}-\underline{n_m^{\text{tr}}}+1$ is the index of tap position $n_m^{\text{tr}}$. Next, we use binary variables $\{s^m_1,s^m_2,\hdots,s^m_K\}$ with adjacency conditions $s^m_p\geq s^m_{p+1}$, $p=1,2,\hdots,K-1$ to represent the operating status of the OLTC branch and the following group of mixed-integer linear constraints exactly describe the OLTC connected at branch $m$ in~\eqref{eq:xmer_pw1}:
\begin{subequations}
\begin{align}
v_i&=t_{m,0}^2v_j+\sum_{p=1}^K\Delta v_p^{m} \label{eq:xmer_pw2} \\
    0&\leq \Delta v_p^{m}\leq
    s^m_p\overline{v}\Delta t_{m,p}
 \label{eq:xmer_pw3}\\
\Delta t_{m,p}(v_j-(1-s^m_p)\bar{v})&\leq    \Delta v_p^{m}\leq \Delta t_{m,p}v_j \label{eq:xmer_pw4}\\
   s^m_{p+1} &\leq s^m_p,\, p=1,2,\hdots,K-1.
    \label{eq:xmer_pw5}
\end{align}
\end{subequations}
Similarly, for capacitor banks, the relation between capacitor bank admittance and number of capacitor bank units is given by~\eqref{eq:P1_cap_num_rel}.

If $Q_i^{\text{cp}}$ represents the reactive power injection from capacitor banks at node $i$, then:
\begin{align}\label{eq:cap_bank_sum}
    Q_i^{\text{cp}}=v_ib_i=\sum_{p=1}^{n_i^{\text{cp}}}(v_ib_{i,p})
\end{align}
represents the bilinearity, where $\{b_{i,1},b_{i,2},\hdots,b_{i,K}\}$ are the admissible admittance values of controllable capacitor banks at node~$i$. Similar to the formulation in \eqref{eq:xmer_pw2}-\eqref{eq:xmer_pw5}, for the capacitor bank at node $i$, \eqref{eq:cap_bank_sum} can be equivalently expressed by the following set of linear constraints~\cite{yang2017optimal}:
\begin{subequations}
\begin{align}
    Q_i^{\text{cp}}=&\sum_{p=1}^KQ_{i,p}^{\text{s}} \label{eq:cap_pw1}\\
    0\leq & Q_{i,p}^{\text{s}}\leq u^i_p\overline{v}b_{i,p} \label{eq:cap_pw2}\\
 b_{i,p}(v_i-(1-u^i_p)\bar{v})\leq &  Q_{i,p}^{\text{s}}\leq v_ib_{i,p}
 \label{eq:cap_pw3}\\
  u^i_{p+1} \leq &  u^i_p,\, p=1,2,\hdots,K-1.
\label{eq:cap_pw5}
\end{align}
\end{subequations}
where binary $\{u^i_1,u^i_2,\hdots,u^i_K\}$ represent the operating status of the capacitor bank units on node~$i$. Based on the linear modeling of OLTCs and capacitor banks in this section, we now present the mixed-integer program to solve the voltage positioning problem with the piecewise linear formulation of OLTCs and capacitor banks as shown in (P2).

\begin{subequations}\label{eq:P2}
\begin{align}
&\textbf{(P2)}& &\min_{q_{\text{g},i},s_p,u_p} \sum_{i=1}^nq_{\text{g},i}^2+\alpha_i(V^+_{\text{v},i}+V^-_{\text{v},i})& \label{eq:P2_obj}
\end{align}
\begin{align}
\text{subject to: }&
\eqref{eq:final_volt_rel}-\eqref{eq:P1_q_rel},\eqref{eq:P1_vbound}-\eqref{eq:P1_int_constraint}\\
&\eqref{eq:xmer_pw2}-\eqref{eq:xmer_pw5}, \eqref{eq:cap_pw1}-\eqref{eq:cap_pw5}
\end{align}
\end{subequations}

The VPO problem presented in (P2) is convex in the continuous variables except for the nonlinear constraint in~\eqref{eq:curr_rel}. One possible solution is to employ convex relaxation techniques to the nonlinear constraints and obtain an SDP or SOCP formulation. However, several works in literature such as~\cite{li2017non}, have shown that employing convex relaxations with an objective that minimizes voltage deviations will lead to a non-zero duality gap. On the other hand, linearized OPF techniques, even though computationally efficient, do not provide guarantees on feasibility or bounds on optimality.

To overcome these challenges associated with the nonlinearity of the power flow equations, the next section describes the convex inner approximation method of the OPF problem.

\section{Formulation of the Convex Inner Approximation} \label{sec:Opt_form}
To obtain the convex inner approximation, the approach presented bounds the non-linear terms in the power flow equations and develops an admissible model that is robust against modeling errors due to the nonlinearity. This means that the technique ensures that nodal voltages, branch power flows, and current magnitudes are within their limits at optimality.

The optimization problem (P2) is non-convex due to the constraint \eqref{eq:curr_rel}. In order to obtain an inner convex approximation of (P2), we bound the nonlinearity introduced due to \eqref{eq:curr_rel}. Let $\mathbf{l_{\text{min}}}\in \mathbb{R}^n$ and $\mathbf{l_{\text{max}}}\in \mathbb{R}^n$ be the lower and upper bound on $\mathbf{l}\in \mathbb{R}^n$, respectively. Then based on these values and provided that the matrices $\mathbf{D_R}$,  $\mathbf{D_X}$, $\mathbf{M_p}$ ,$\mathbf{M_q}$ and $\mathbf{H}$ are positive for an inductive radial network~\cite{nazir2019convex}, define:
\begin{align}
    \mathbf{V^+}:=&v_{\text{0}}\mathbf{1}_n+\mathbf{M_{\text{p}}p}+\mathbf{M_{\text{q}}q}-\mathbf{Hl_{\text{min}}}\label{eq:V_relation_1}\\
    \mathbf{V^-}:=&v_{\text{0}}\mathbf{1}_n+\mathbf{M_{\text{p}}p}+\mathbf{M_{\text{q}}q}-\mathbf{Hl_{\text{max}}}.\label{eq:V_relation_2}
\end{align}

If $\mathbf{l_{\text{min}}}$ and $\mathbf{l_{\text{max}}}$ are known, then the optimization problem (P2) can be modified to a convex inner approximation of the OPF problem. In the proceeding analysis we will provide a method to obtain an accurate representation of these bounds.
 In~\cite{nazir2019convex}, we provided conservative bounds on the nonlinearity based on worst case net-demand forecasts. In this section, we present rigorous analysis to obtain tighter lower and upper bounds on the nonlinearity using local bounds. 
 

Consider the nonlinear term in the power flow equations given by~\eqref{eq:curr_rel}. From the second-order Taylor series expansion, $l_{ij}$ can be expressed as:
\begin{align}\label{eq:T_exp}
    l_{ij} & \approx l_{ij}^0 + \mathbf{J_{ij}^\top} \mathbf{\delta_{ij}} +\frac{1}{2}\mathbf{\delta_{ij}^\top} \mathbf{H_{\text{e},ij}} \mathbf{\delta_{ij}}
\end{align}
where $l_{ij}^0$ is the value of $l_{ij}$ at the forecast net-demand and $\mathbf{\delta_{ij}}$, the Jacobian $\mathbf{J_{ij}}$ and the Hessian $\mathbf{H_{\text{e},ij}}$ are defined below.
\begin{align}\label{eq:Jacobian}
    \mathbf{\delta_{ij}}:=\begin{bmatrix} P_{ij}-P_{ij}^0\\
Q_{ij}-Q_{ij}^0\\ v_i-v_i^0\end{bmatrix} \qquad
\mathbf{J_{ij}}:=\begin{bmatrix}\frac{2P^0_{ij}}{v^0_i}\\\frac{2Q^0_{ij}}{v^0_i}\\ -\frac{(P^0_{ij})^2+(Q^0_{ij})^2}{(v^0_i)^2}\end{bmatrix}
\end{align}
\begin{align}\label{eq:Hessian}
    \mathbf{H_{\text{e},ij}}:=\begin{bmatrix} \frac{2}{v^0_i} && 0 && \frac{-2P^0_{ij}}{(v^0_i)^2}\\
    0 && \frac{2}{v^0_i} && \frac{-2Q^0_{ij}}{(v^0_i)^2}\\
    \frac{-2P^0_{ij}}{(v^0_i)^2} && \frac{-2Q^0_{ij}}{(v^0_i)^2} && 2\frac{(P^0_{ij})^2+(Q^0_{ij})^2}{(v^0_i)^3}
    \end{bmatrix}
\end{align}
 where the superscript 0, $(.)^0$, denotes the nominal values at the forecasted net demand for all variables $(.)$ in~\eqref{eq:Jacobian} and~\eqref{eq:Hessian}. The eigenvalues of the Hessian $\mathbf{H_{\text{e},ij}}$ are all non-negative, with two of the eigenvalues being strictly positive and one is zero. As the Hessian is positive semi-definite, the nonlinear function $l_{ij}$ is convex. If a function is convex then the linear approximation underbounds the nonlinear function~\cite{boyd2004convex}, i.e.,
\begin{align}
      l_{ij} & \ge l_{ij}^0 + \mathbf{J_{ij}^\top}\mathbf{\delta_{ij}} =: l_{\text{min},ij} \qquad \forall (i,j) \in \mathcal{L}\label{eq:l_lower}
\end{align}

The upper bound on the nonlinearity is obtained next and the convex inner approximation based on these bounds is presented.
Applying Taylor's theorem to the expansion, the upper bound on the nonlinear function $l_{ij}$ is given by:

\begin{align}
    |l_{ij}| & \approx |l_{ij}^0 + \mathbf{J_{ij}^\top}\mathbf{\delta_{ij}}+\frac{1}{2}\mathbf{\delta_{ij}^\top} \mathbf{H_{\text{e},ij}} \mathbf{\delta_{ij}}| \\
    & \le |l_{ij}^0| + |\mathbf{J_{ij}^\top}\mathbf{\delta_{ij}}|+|\frac{1}{2}\mathbf{\delta_{ij}^\top} \mathbf{H_{\text{e},ij}} \mathbf{\delta_{ij}}| \\
    & \le l_{ij}^0 + \max\{2|\mathbf{J_{ij}^\top}\mathbf{\delta_{ij}}|,|\mathbf{\delta_{ij}^\top} \mathbf{H_{\text{e},ij}} \mathbf{\delta_{ij}}|\} =: l_{\text{max},ij}\label{eq:l_upper}
\end{align}

Based on this upper and lower bound determined, we can now formulate the complete convex inner approximation VPO problem by modifying (P2) as:

\begin{subequations}\label{eq:P3}
\begin{align}
&\textbf{(P3)}& &\min_{q_{\text{g},i},s_p,u_p} \sum_{i=1}^nq_{\text{g},i}^2+\alpha_i(V^+_{\text{v},i}+V^-_{\text{v},i})& \label{eq:P3_obj}
\end{align}
\begin{align}
\text{subject to: }&
\eqref{eq:V_relation_1},\eqref{eq:V_relation_2},\eqref{eq:l_lower},\eqref{eq:l_upper}\\
&\eqref{eq:xmer_pw2}-\eqref{eq:xmer_pw5}, \eqref{eq:cap_pw1}-\eqref{eq:cap_pw5}\\
&\mathbf{V_{\text{min}}} \leq \mathbf{V^-} ; \mathbf{V^+}  \leq \mathbf{V_{\text{max}}}\label{eq:P3_f}\\
&\mathbf{\underline{V}}-\mathbf{V^-_{\text{v}}} \leq \mathbf{V^-} ; \mathbf{V^+}  \leq \mathbf{\overline{V}}+\mathbf{V^+_{\text{v}}}\label{eq:P4_f}\\
& \eqref{eq:P1_q_rel},\eqref{eq:P1_sl_pos}-\eqref{eq:P1_int_constraint}
\end{align}
\end{subequations}

The optimization problem (P3) represents the convex inner approximation of the VPO problem that provides a network admissible solution. This formulation includes discrete mechanical assets resulting in mixed-integer linear program (MILP) Voltage Positioning Optimization problem.

\subsection{Iterative algorithm for improving solution}
In this section, we present an iterative algorithm that achieves tighter bounds on the non-linearity. The lower and upper bounds obtained in previous section can be conservative depending upon the initial net-demand forecast. Without Algorithm~1, if we only solved (P3) once, it could result in a conservative inner approximation, which would lead to reduced performance. This is because the operating point $\mathbf{x}_0$ could be close to the no-load condition, i.e.,  $P_{ij}^0=Q_{ij}^0\approx0$, which means that the Jacobian would be close to zero per (20) and the first-order estimate of $l_{\min}$ and $l_{\max}$ would be close to $l_0$ per (22) and (25). This results in conservative feasible set for (P3) and Algorithm~1 overcomes this by successively enhancing the feasible solutions by updating the operating point and the Jacobian (and the Hessian) with the optimized decision variables, sometimes called the convex-concave procedure~\cite{boyd2004convex}. Algorithm~\ref{alg1} shows the steps involved in the proposed iterative scheme, where $\mathbf{q_{\text{g}}}^{\ast}(k)$ and $\mathbf{Q^{\text{cp}}\ast}(k)$ represent the solution of (P3) for the $k$th iteration of Algorithm~\ref{alg1}, whereas $\mathbf{q}(k)$ represents the net-reactive power injection at iteration $k$. It is assumed that the initial operating point satisfies $\mathbf{q_{\text{g}}}$ and $\mathbf{Q^{\text{cp}}}$ both being zero and, hence, $\mathbf{q}(0)=-\mathbf{Q_{\text{L}}}$. Finally the result of Algorithm~1 is given by the cumulative sum of the iterates, i.e., $\mathbf{q_{\text{g}}}=\sum_{i=1}^{k-1}\mathbf{q_{\text{g}}}^{\ast}(i)$, $\mathbf{Q^{\text{cp}}}=\sum_{i=1}^{k-1}\mathbf{Q^{\text{cp}}\ast}(i)$.

\begin{algorithm}\label{alg1}
 \caption{Successive feasible solution enhancement}
        \SetAlgoLined
        \KwResult{ $\mathbf{q_{\text{g}}}$, $\mathbf{Q^{\text{cp}}}$, $\mathbf{n^{\text{tr}}}$}
        \textbf{Input:} $\mathbf{Q_{\text{L}}}$, $f(\mathbf{x}_0)$, $\epsilon$\\
        Run AC load flow with $\mathbf{q}(0)=-\mathbf{Q_{\text{L}}}$ $\Rightarrow$ $\mathbf{J}(0),  \mathbf{H_{\text{e}}}(0)$\\
        Initialize $k=1$, $error(0)=\infty$\\
        \While{$error(k-1$) $>\epsilon$}
        {
        Solve (P3)  $\Rightarrow \mathbf{q_{\text{g}}}^{\ast}(k),  \mathbf{Q^{\text{cp}}\ast}(k), \mathbf{n^{\text{tr}}\ast}(k)$,  $f(\mathbf{x}_k^{\ast})$ \\
        Update $\sum_{i=1}^k(\mathbf{q_{\text{g}}}^{\ast}(i)+\mathbf{Q^{\text{cp}}\ast}(i))-\mathbf{Q_{\text{L}}}\Rightarrow \mathbf{q}(k)$\\
        Run AC load flow with $\mathbf{q}(k)$ $\Rightarrow$ $\mathbf{J}(k),  \mathbf{H_{\text{e}}}(k)$\\
        Update $error(k)=||f(\mathbf{x}_k^{\ast})-f(\mathbf{x}_{k-1}^{\ast})||_{\infty}$\\
        $k:=k+1$
        }
        $\mathbf{q_{\text{g}}}=\sum_{i=1}^{k-1}\mathbf{q_{\text{g}}}^{\ast}(i)$, $\mathbf{Q^{\text{cp}}}=\sum_{i=1}^{k-1}\mathbf{Q^{\text{cp}}\ast}(i)$, $\mathbf{n^{\text{tr}}}=\mathbf{n^{\text{tr}}\ast}(k-1)$
\end{algorithm}
Theorem~\ref{theorem1} proves the feasibility and convergence of solutions obtained through Algorithm~\ref{alg1}.
\begin{theorem}\label{theorem1}
Every iterate of Algorithm~1 is AC feasible and the iterates converge to a locally optimal solution.
\end{theorem}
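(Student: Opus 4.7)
The plan is to decompose the theorem into the two claims it combines --- (i) AC feasibility of every iterate and (ii) convergence of the sequence to a local optimum --- and to handle each using a variant of the standard convex-concave procedure analysis, specialized to our inner-approximation setting.

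For claim (i), the argument rests on the fact that (P3) is a genuine inner approximation of the AC power flow model. The lower bound \eqref{eq:l_lower} comes from convexity of $l_{ij}$ in $(P_{ij}, Q_{ij}, v_i)$ (any affine underestimator of a convex function lies below it), while the upper bound \eqref{eq:l_upper} is a Taylor-remainder bound. Together these bracket the true loss, $l_{\min,ij}\le l_{ij}\le l_{\max,ij}$. Because $\mathbf{H}\ge 0$ (established in Remark~2), the voltage map \eqref{eq:final_volt_rel1} is monotone nonincreasing in $\mathbf{l}$, so the surrogates $\mathbf{V^-}$ and $\mathbf{V^+}$ defined in \eqref{eq:V_relation_1}--\eqref{eq:V_relation_2} actually bracket the true voltage. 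Enforcing $\mathbf{V_{\min}}\le \mathbf{V^-}$ and $\mathbf{V^+}\le \mathbf{V_{\max}}$ in (P3) therefore guarantees $\mathbf{V_{\min}}\le \mathbf{V}\le \mathbf{V_{\max}}$ at the AC power flow solution corresponding to each iterate's injections; the remaining box constraints on $q_{\text{g},i}$, $n_m^{\text{tr}}$, and $n_i^{\text{cp}}$ are enforced directly in (P3), so the AC load flow in line~7 of Algorithm~\ref{alg1} returns an admissible operating point at every iteration.

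For claim (ii), I would establish that the objective sequence $\{f(\mathbf{x}_k^\ast)\}$ is nonincreasing and bounded below. Nonincreasingness is the key structural step: at iteration $k+1$ the Taylor expansion is re-centered at the AC-feasible operating point produced by iteration $k$, so the zero-increment choice $\mathbf{q_{\text{g}}}^\ast(k+1) = \mathbf{Q^{\text{cp}}\ast}(k+1) = 0$ is feasible for the new (P3) instance and recovers exactly the objective value $f(\mathbf{x}_k^\ast)$; consequently the optimum at iteration $k+1$ cannot be worse. Boundedness below is immediate since the objective is a sum of a quadratic form and nonnegative slack variables. Convergence of $\{f(\mathbf{x}_k^\ast)\}$ then follows from the monotone convergence theorem, and the stopping criterion $\|f(\mathbf{x}_k^\ast) - f(\mathbf{x}_{k-1}^\ast)\|_\infty < \epsilon$ is satisfied after finitely many iterations. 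At a fixed point, the Jacobian and Hessian are evaluated at the current iterate and the inner-approximation surrogates meet the true constraint tangentially, so the first-order (KKT) optimality conditions of the convex surrogate (P3) coincide with those of the original program (P1), yielding local optimality, which is the conclusion one expects from a convex-concave procedure~\cite{boyd2004convex}.

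The main obstacle is making the monotonicity argument airtight in the presence of the mixed-integer variables $n_m^{\text{tr}}$ and $n_i^{\text{cp}}$, whose values can jump discretely between iterations. Strictly speaking the zero-increment-is-feasible observation only delivers a nonincreasing objective when the integer assignments do not change; one must therefore argue that the integer variables stabilize after finitely many iterations (they live in a finite lattice and each accepted change strictly reduces the objective by the same feasibility argument), after which the remaining tail of the algorithm is a purely continuous convex-concave procedure whose convergence is classical. A secondary subtlety is verifying that the upper-bound construction \eqref{eq:l_upper} remains a valid overestimator after re-centering, which reduces to checking that the $\max$-of-absolute-values term dominates the Taylor remainder on the feasible polytope of increments --- a trust-region-style restriction on $\|\mathbf{\delta_{ij}}\|$ can be added if this does not hold globally.
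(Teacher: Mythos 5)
Your proposal is correct and follows essentially the same route as the paper's proof: AC feasibility comes from the inner-approximation containment $\Psi_{k-1}(\mathbf{x}_{k-1}^{\ast})\subseteq\chi$, monotone decrease comes from the previous iterate (equivalently, the zero increment) being feasible for the next instance of (P3), and convergence follows from the monotone convergence theorem applied to the nonincreasing, bounded-below objective sequence. If anything, you are more careful than the paper, which asserts the containment ``by definition of inner approximation'' and does not address the stabilization of the integer variables or the fixed-point/KKT justification of local optimality that you supply.
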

\begin{proof}
Let $\chi$ be the feasible set of the underlying, nonconvex ACOPF from (P1) with convex objective function $f(.)$ given in (11a) and let $\mathbf{x}_0=\begin{bmatrix}\mathbf{P(p,q)} & \mathbf{Q(p,q)}
& \mathbf{V(p,q)} & \mathbf{l(p,q)}
\end{bmatrix}^{\top}$ be a feasible AC operating point, i.e., $\mathbf{x}_0\in \chi$, that depends on $\mathbf{(p,q)}$ injections. Also, define the feasible set of the convex inner approximation (P3) based on $\mathbf{x}_0$ as $\Psi_0(\mathbf{x}_0)$. Now, let $\mathbf{x}_1^{\ast}$ be the optimal solution of (P3), then $\mathbf{x}_1^{\ast}\in \Psi_0(\mathbf{x}_0)$ and $\mathbf{x}_0 \in \Psi_0(\mathbf{x}_0)$ and by definition of inner approximation $\Psi_0(\mathbf{x}_0) \subseteq \chi$. Also since $\mathbf{x}_1^{\ast}$ is the optimal solution, then $f(\mathbf{x}_1^{\ast})\le f(\mathbf{x}_0)$. This process can be repeated so that, for the $k$th iteration ($k\in \mathbb{N}_+$), $\Psi_{k-1}(\mathbf{x}_{k-1}^{\ast}) \subseteq \chi$ is the feasible set of (P3) and $\mathbf{x}_k^{\ast} \in \Psi_{k-1}(\mathbf{x}_{k-1}^{\ast})$ is the optimal solution of (P3) with $f(\mathbf{x}_k^{\ast})\le f(\mathbf{x}_{k-1}^{\ast})$. This implies that each iterate is an improved solution that is feasible and continuing this process  
yields a non-increasing sequence: $\{f(\mathbf{x}_k^{\ast})\}_{k\in \mathbb{N}_+}$ that is bounded below by zero (since $f(\mathbf{x}) \ge 0 \,\, \forall \mathbf{x}\in\chi$). Thus, by the greatest-lower-bound property of real numbers, we know $\inf_{k\in \mathbb{N}_+}\{f(\mathbf{x}_k^{\ast})\} \in [0,f(\mathbf{x}_0)]$ exists. Since $\{f(\mathbf{x}_k^{\ast})\}_{k\in\mathbb{N}_+}$ is non-increasing and bounded below, 
by the monotone convergence theorem~\cite{rosenlicht1986introduction}, $error(k):=||f(\mathbf{x}_k^{\ast})-f(\mathbf{x}_{k-1}^{\ast})||_{\infty} \rightarrow 0$ as $k \rightarrow \infty$. Thus, we have proven that application of Algorithm~1 converges to an AC feasible, locally optimal solution, $\mathbf{x}^{\ast}$, through a sequence of successively improved AC-feasible iterates, $\mathbf{x}_k^{\ast}$, and that $\mathbf{x}^{\ast}$ improves on the original objective by $||f(\mathbf{x}_1^{\ast}) - f(\mathbf{x}^{\ast})||_{\infty}$.
\end{proof}

Since the above problem is convex in the continuous variables, the MILP can be solved effectively and provide good feasible solutions~\cite{lubin2016extended}. The formulation (P3) minimizes the utilization of reactive power from flexible DERs, prioritizing mechanical assets as a result. The formulation positions the voltage within the tighter voltage bounds, close to nominal, while the voltage violation terms $\mathbf{V^+_{\text{v}}}$ $\mathbf{V^-_{\text{v}}}$ ensure feasibility of the solution.

  In Section~\ref{sec:sim_results}, simulations involving standard IEEE test networks, e.g., see~\cite{kersting2001radial},  show that the results of this analysis holds for these radial distribution network. Simulation-based analysis is conducted on IEEE-13 node and IEEE-37 node system to check the validity of the approach on standard networks and analyze how tighter voltage bounds affect the utilization of DER reactive power at optimality.

\section{Simulation Results}\label{sec:sim_results}
 In this section, simulation tests are conducted on IEEE test cases and validation of the results is performed with Matpower~\cite{zimmerman2011matpower} on a standard MacBook Pro laptop with 2.2 GHz of processor speed and 16 GB RAM. Simulation results illustrate the validity of the VPO problem (P3). The optimization problem is solved using GUROBI 8.0~\cite{gurobi}, whereas the simulation is performed with AC load flows in Matpower. In all the simulation results, the system was solved to a MIP gap of under 0.01\%.
 
 \begin{figure}[h]
\centering
\includegraphics[width=0.33\textwidth]{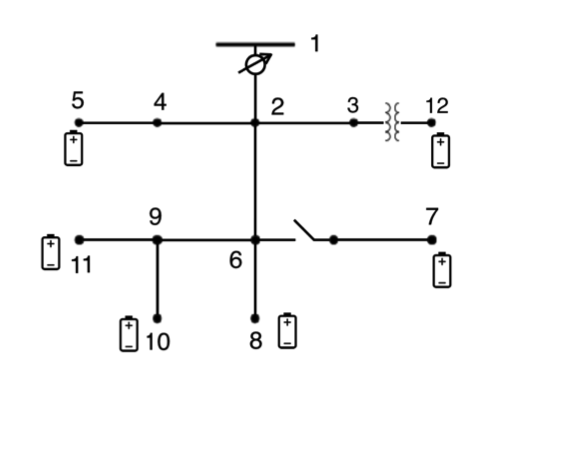}
\caption{\label{fig:IEEE_13node} IEEE-13 node distribution network with added DERs at leaf nodes.}
\end{figure}
 
For the IEEE-13 node shown in Fig.~\ref{fig:IEEE_13node} and IEEE-37 node test case shown in Fig.~\ref{fig:IEEE_37node}, optimal reactive dispatch schedules from (P3) are fed to an AC load flow in Matpower. The IEEE-13 node test case is modified to include capacitor banks at nodes 7 and 11, besides having an OLTC connecting nodes 3 and 12. Each capacitor bank operates with 10 increments with each increment being 50 kVAr. Apart from these mechanical resources, DERs are placed at leaf nodes $5$, $7$, $8$, $10$, $11$ and $12$ with each DER $q_{\text{g},i}$ at node $i$ having a range of -100 to +100 kVAr. In all the test cases the value of $\alpha$ is chosen to be .001.

The comparison of the output voltages of the optimizer (upper and lower bounds) and the AC power flow solver over two iterations are shown in Fig.~\ref{fig:voltprofile1} and~\ref{fig:voltprofile2} respectively. The optimal value changes from $3.6414\times 10^{-6}$ (pu) to $3.196\times 10^{-6}$ (pu), while the OLTC tap position stays fixed at position 2. From the figures, it is shown that the actual voltages are within the determined bounds and the voltage bounds converge to the AC power flow solution.


\begin{figure}
    \centering
  \subfloat[\label{fig:voltprofile1}]{%
       \includegraphics[width=0.5\linewidth]{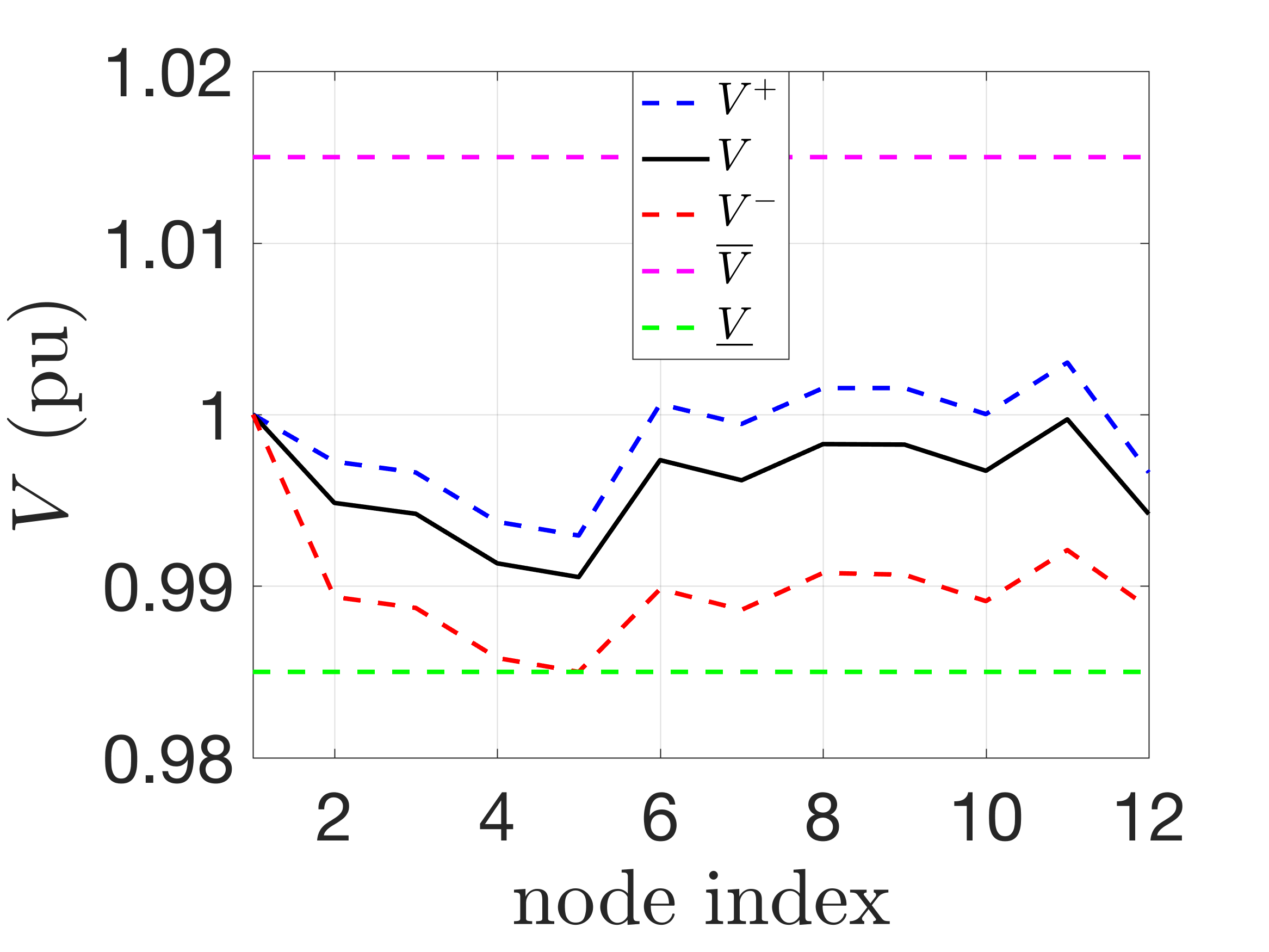}}
    \hfill
  \subfloat[\label{fig:voltprofile2}]{%
        \includegraphics[width=0.5\linewidth]{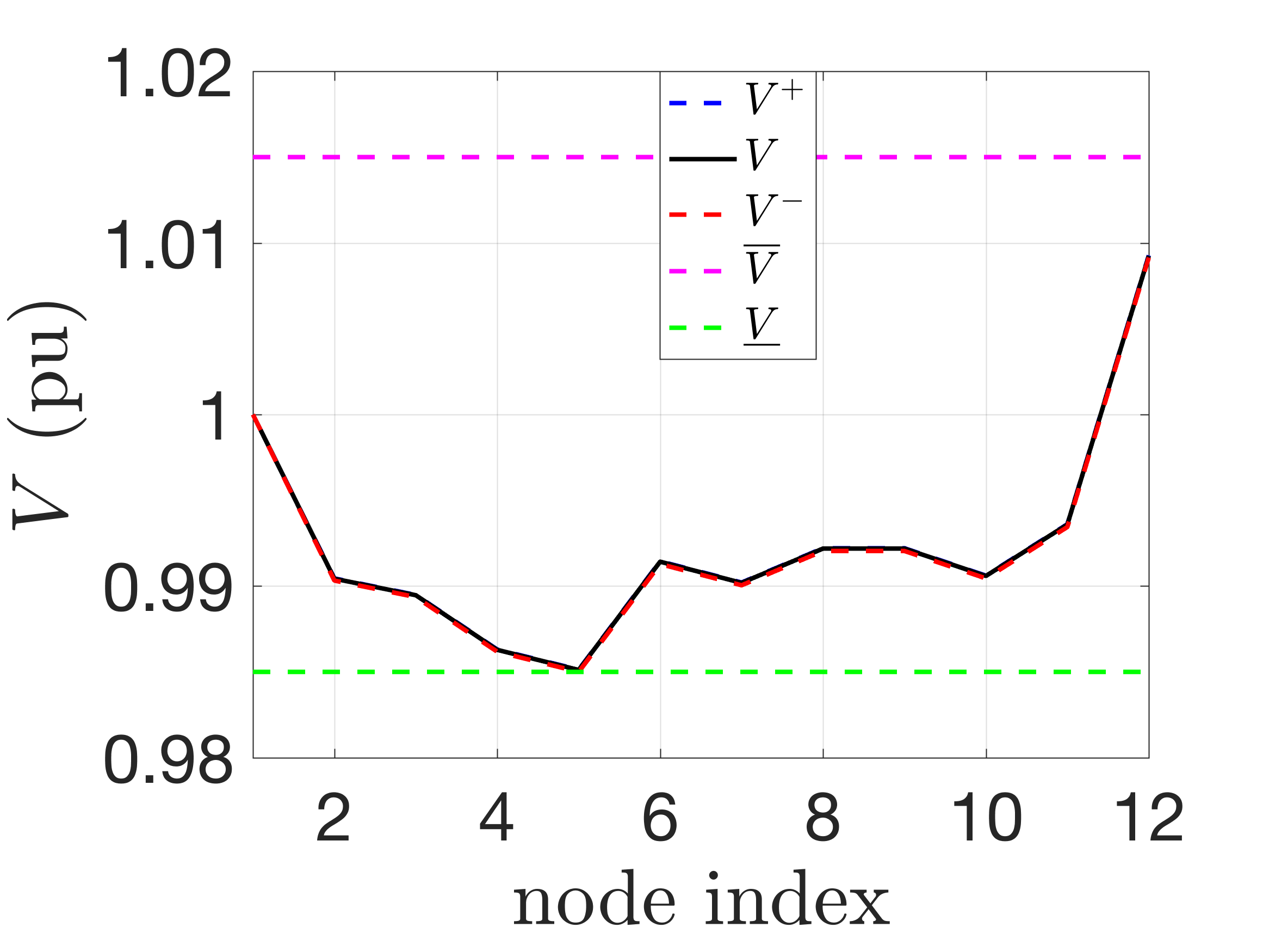}}

\caption{(a) Comparison of the actual nodal voltage with the upper and lower bound voltages for the first iteration. (b) Comparison of the actual nodal voltage with the upper and lower bound voltages for the second iteration. The objective value changed from 3.614$\times 10^-6$ to 3.196 $\times 10^-6$ with a value of $\alpha$ chosen being $0.001$ and the OLTC tap position at tap position 2 in both iterations.}
  \label{fig1} 
\end{figure}

In another simulation conducted on the IEEE-13 node system, we study the sensitivity on the reactive power utilization and the voltage violation term. Fig.~\ref{fig:alphaQlog} shows the variation in the total flexible reactive power consumption as the parameter $\alpha$ is varied, with the nominal value of $\alpha$ being highlighted.
Similarly, Fig.~\ref{fig:alphaVsllog} shows the change in the total voltage violation term as we sweep across $\alpha$, illustrating that as $\alpha$ increases the voltage violation term reduces as expected.

\begin{figure}
    \centering
  \subfloat[\label{fig:alphaQlog}]{%
       \includegraphics[width=0.5\linewidth]{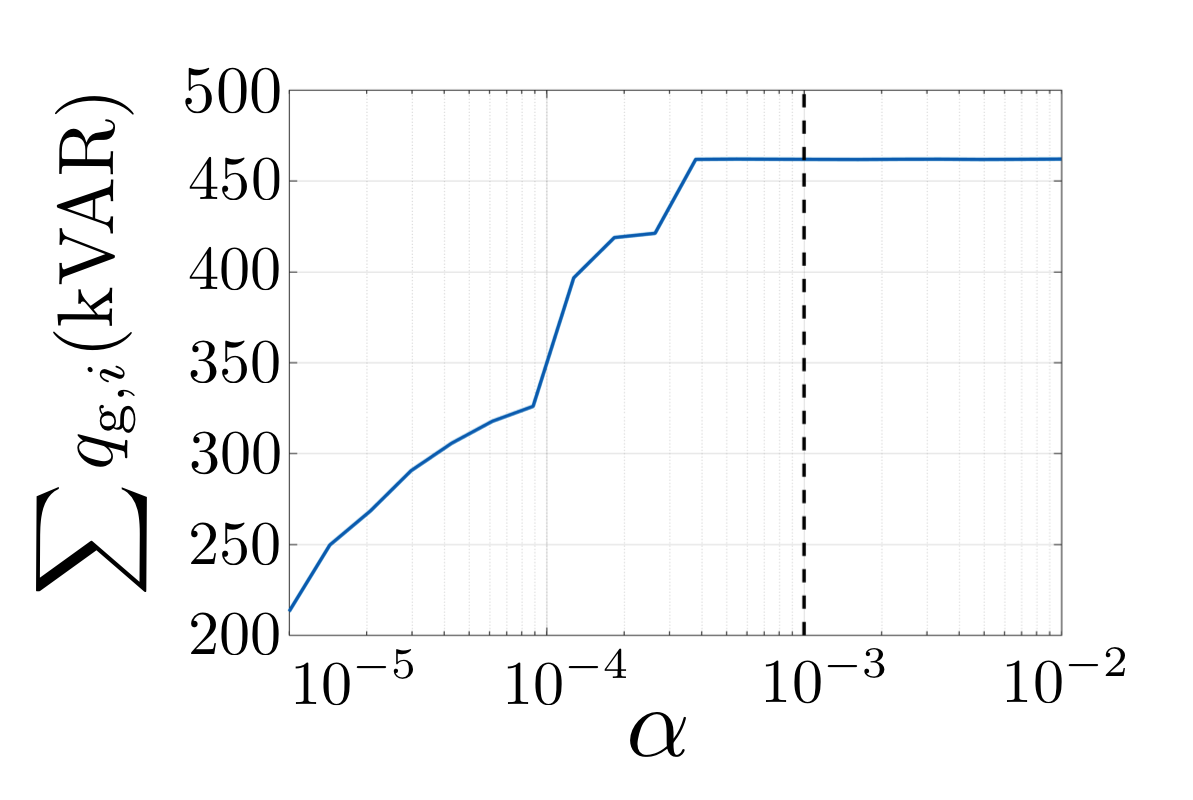}}
    \hfill
  \subfloat[\label{fig:alphaVsllog}]{%
        \includegraphics[width=0.5\linewidth]{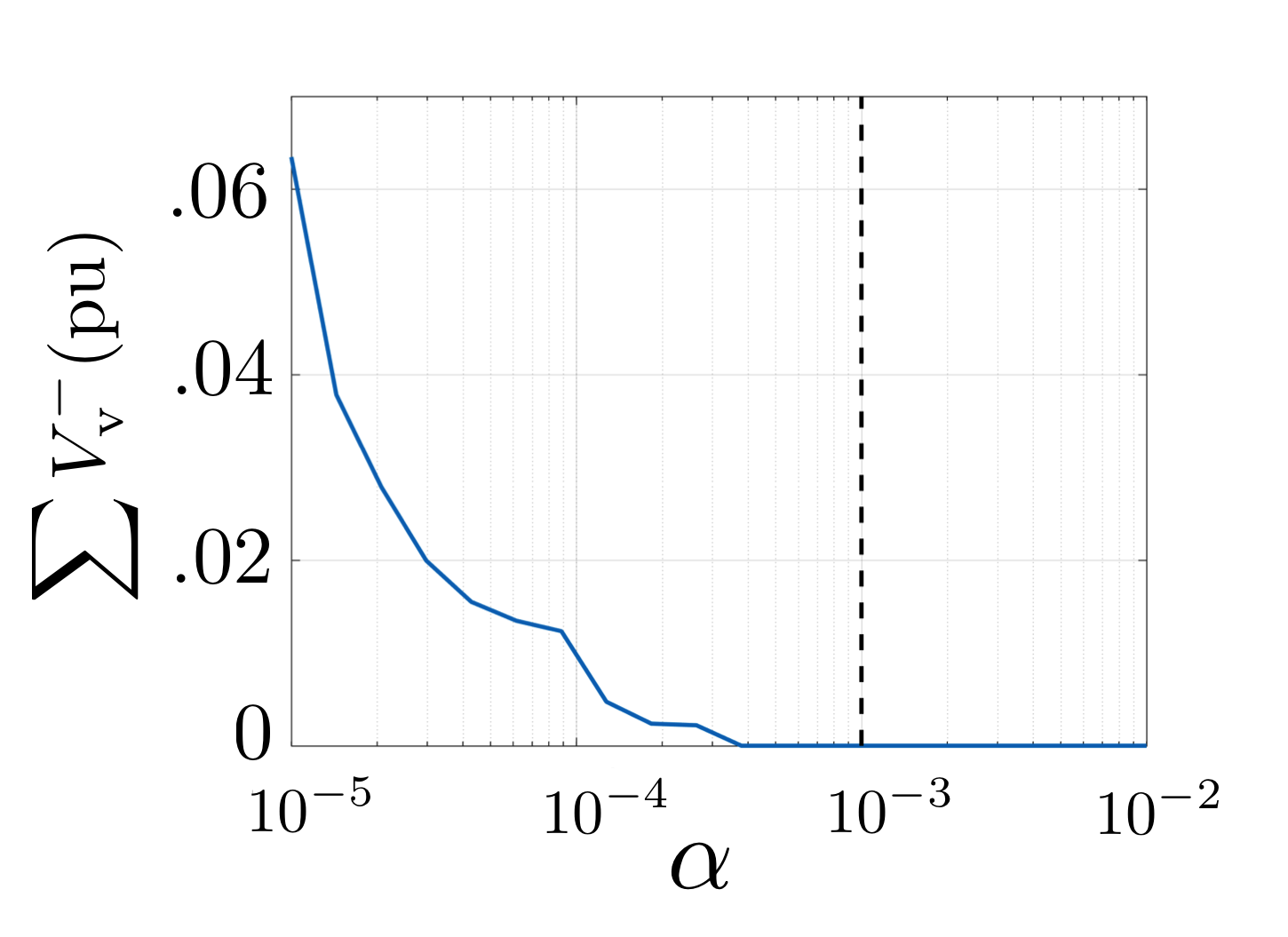}}

\caption{(a) Variation in the total flexible reactive power consumption with change in $\alpha$, (b) Variation in the total lower voltage violation term with change in $\alpha$.}
  \label{fig2} 
\end{figure}

Fig.~\ref{fig:VlowQ} shows the variation of the total flexible reactive power as the lower voltage bound is increased, showing the trade off between positioning voltage closer to nominal and the utilization of flexible reactive power resources, with the nominal value of $\mathbf{\underline{V}}$ used being highlighted. Similarly, Fig.~\ref{fig:VlowVsl} shows the variation in the total voltage violation with the increase in the lower voltage bound, showing a similar trend.

\begin{figure}
    \centering
  \subfloat[\label{fig:VlowQ}]{%
       \includegraphics[width=0.5\linewidth]{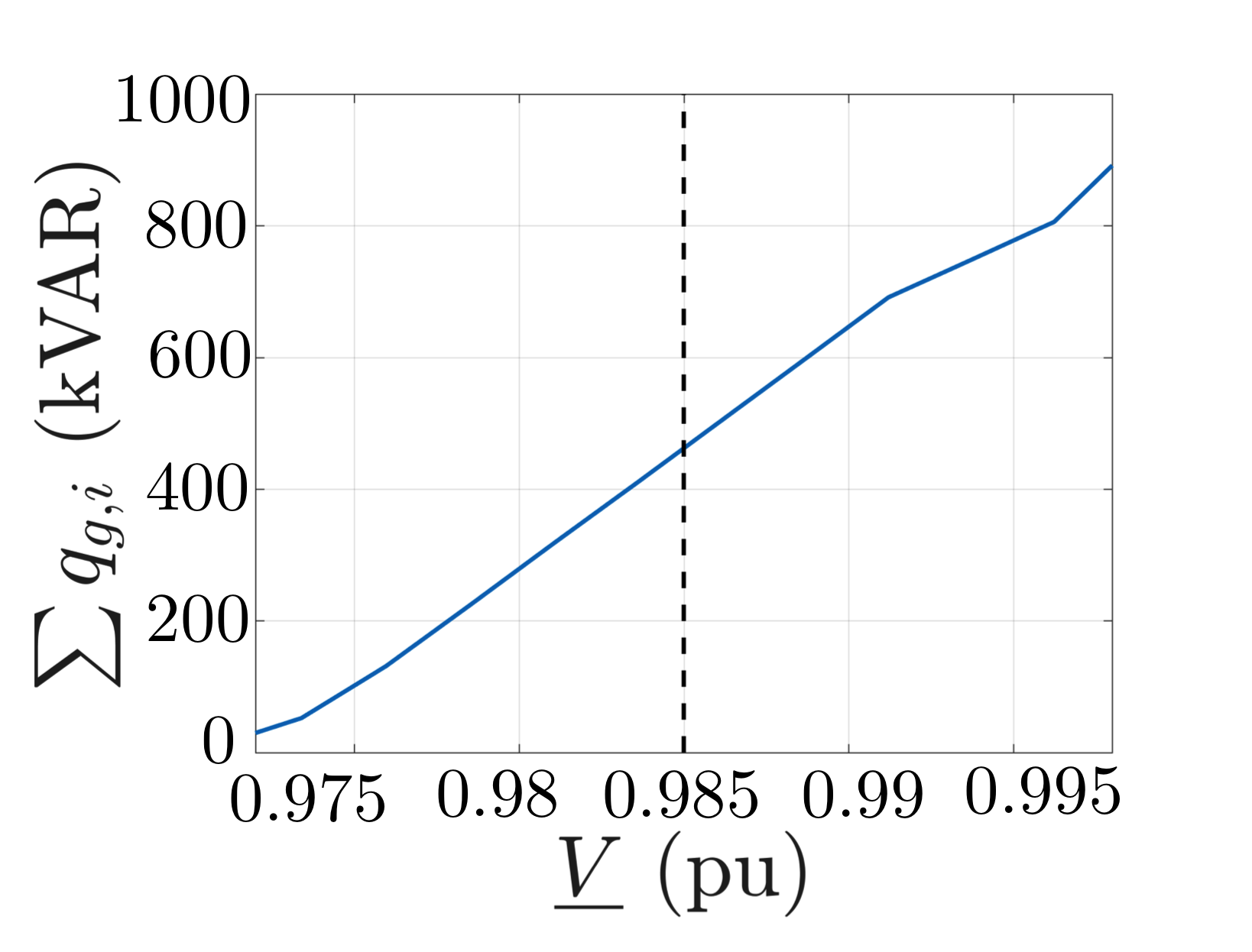}}
    \hfill
  \subfloat[\label{fig:VlowVsl}]{%
        \includegraphics[width=0.5\linewidth]{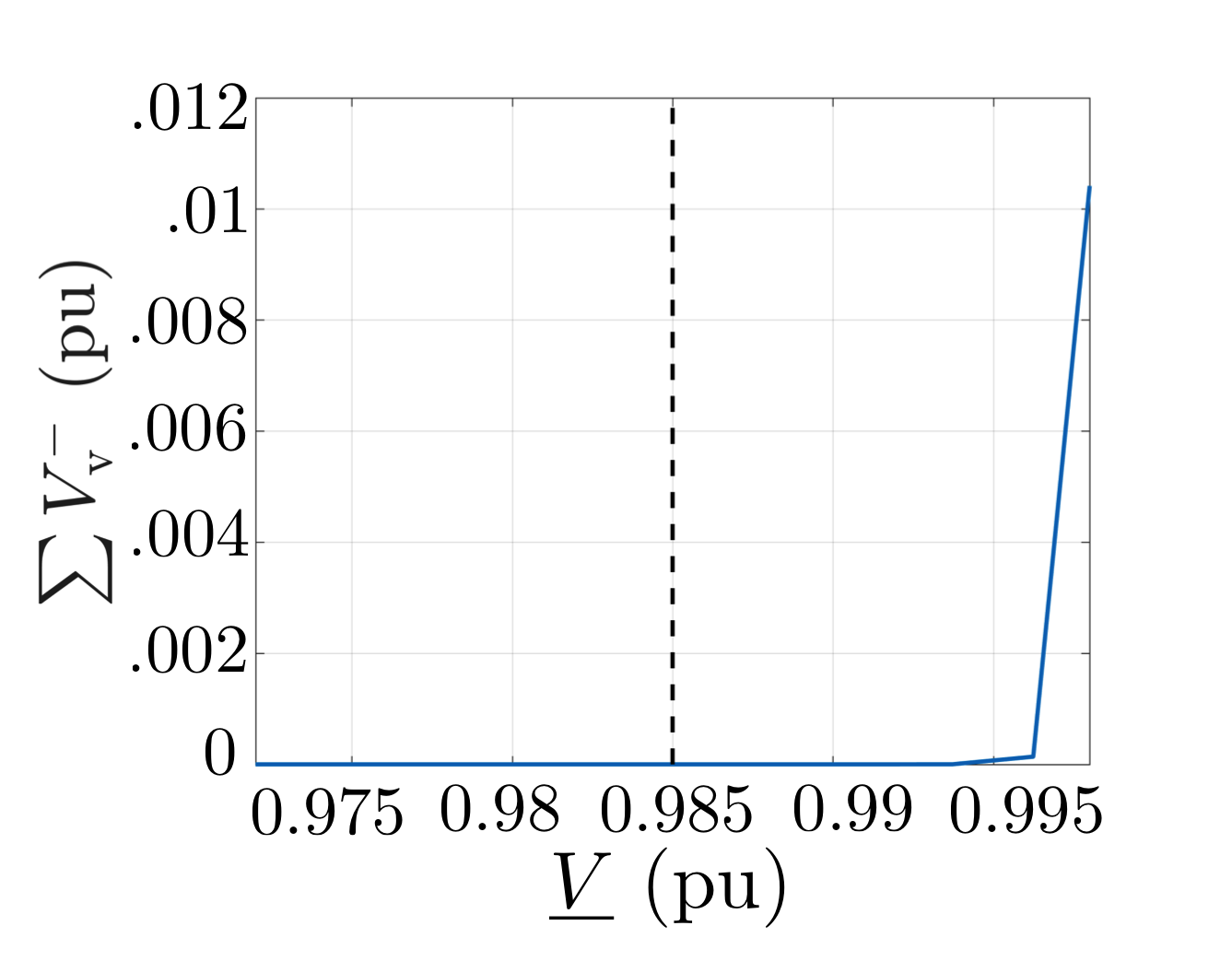}}

\caption{(a) Variation in the total flexible reactive power consumption with change in $\mathbf{\underline{V}}$, (b) Variation in the total lower voltage violation term with change in $\mathbf{\underline{V}}$.}
  \label{fig3} 
\end{figure}




Simulations are also conducted by considering a daily predicted 24-hour load profile shown in Fig.~\ref{fig:load_profile} obtained from real load data measured from feeders near Sacramento, CA during the month of August, 2012~\cite{bank2013development}.  Figure~\ref{fig:reactive_profle} shows the predicted aggregated reactive power supply from DERs over the horizon, whereas Fig.~\ref{fig:cap_profile} shows the predicted aggregate cap bank reactive power supply, illustrates that as the load in the system increases, the aggregate utilization of reactive power from capacitor banks also increases and follows a similar trend.  Figure~\ref{fig:xmer_profile} shows the optimal OLTC tap positions over the prediction horizon.

\begin{figure}
    \centering
  \subfloat[\label{fig:load_profile}]{%
       \includegraphics[width=0.5\linewidth]{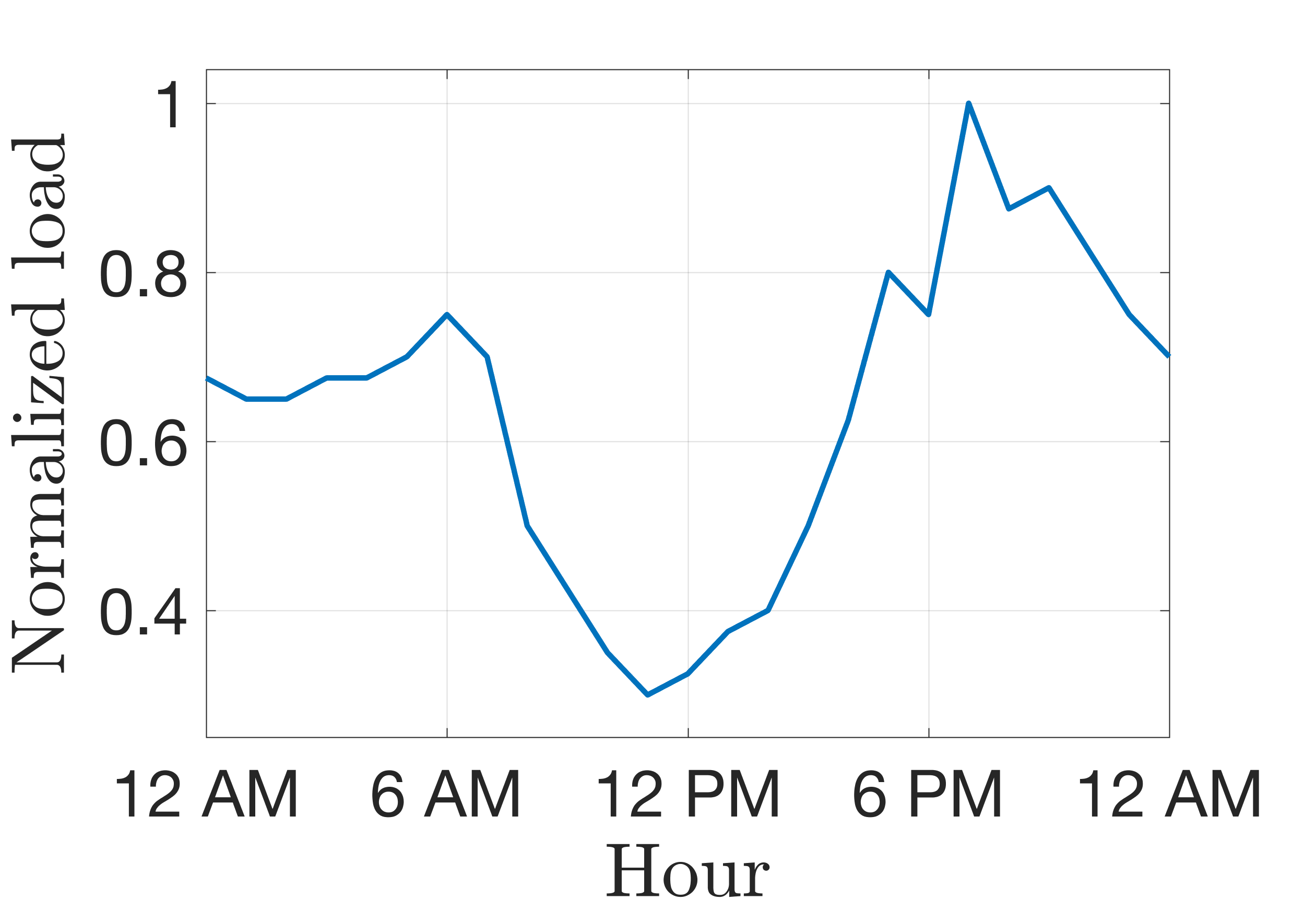}}
    \hfill
  \subfloat[\label{fig:reactive_profle}]{%
        \includegraphics[width=0.5\linewidth]{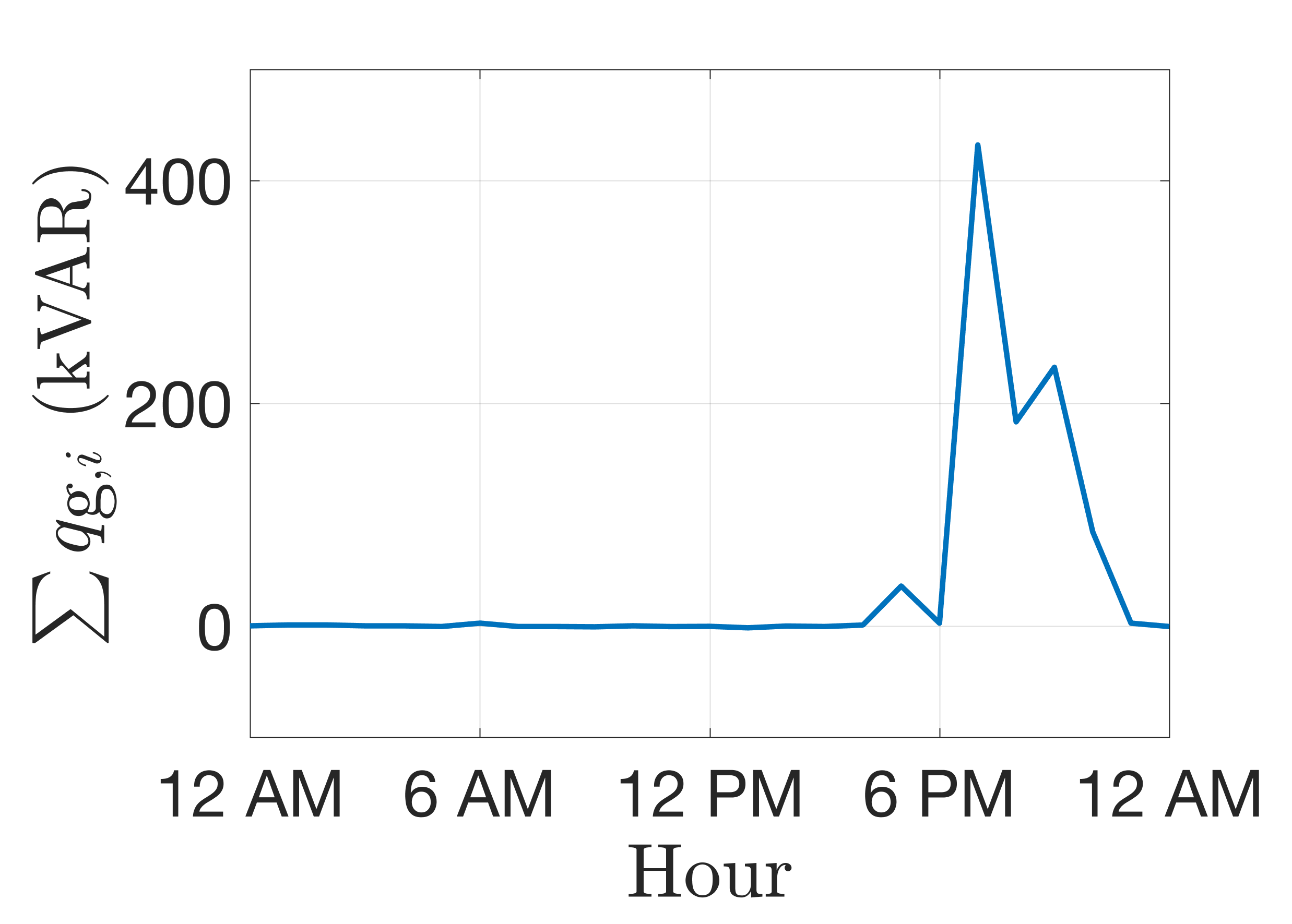}}
        \\
  \subfloat[\label{fig:cap_profile}]{%
        \includegraphics[width=0.5\linewidth]{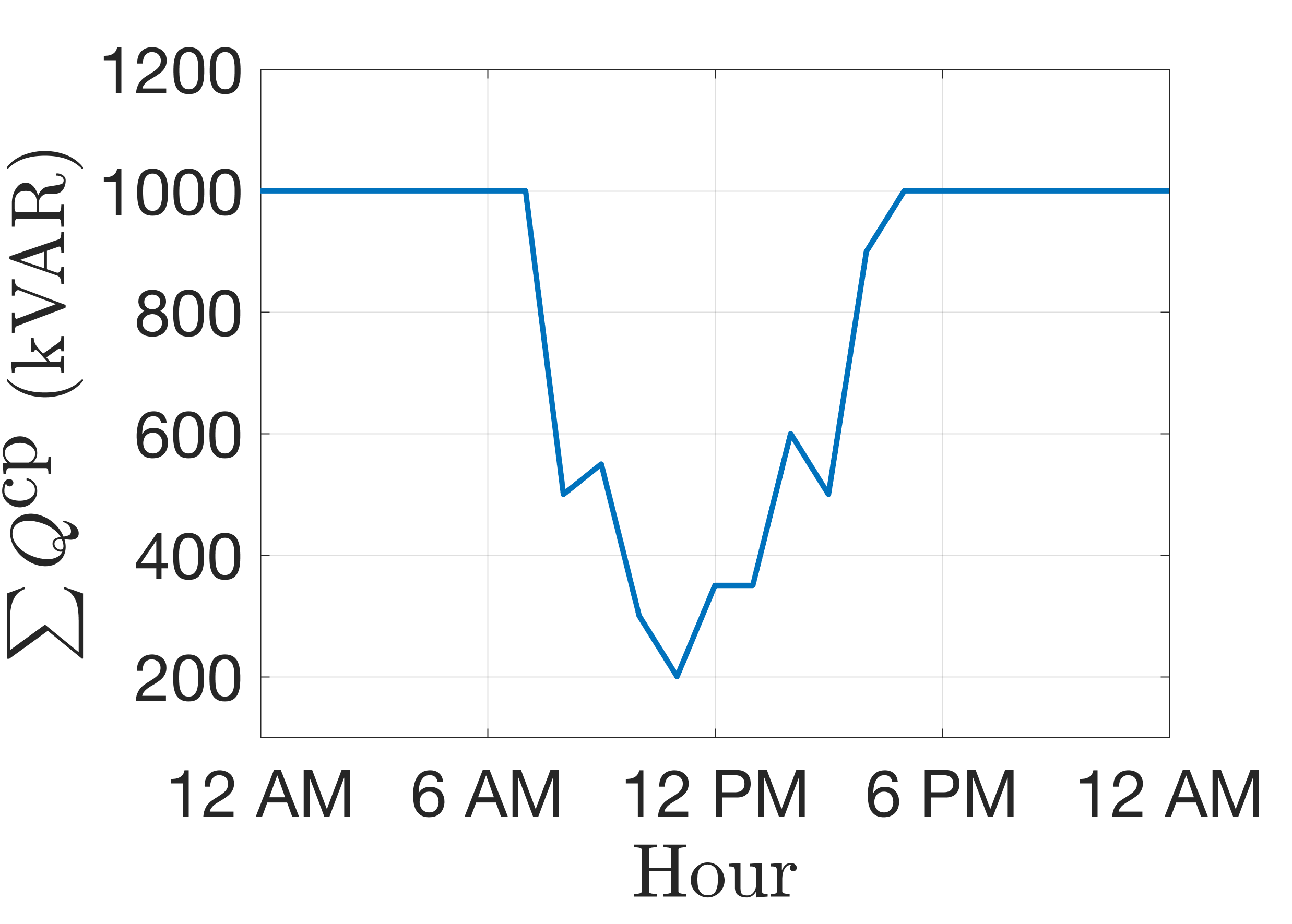}}
    \hfill
  \subfloat[\label{fig:xmer_profile}]{%
        \includegraphics[width=0.5\linewidth]{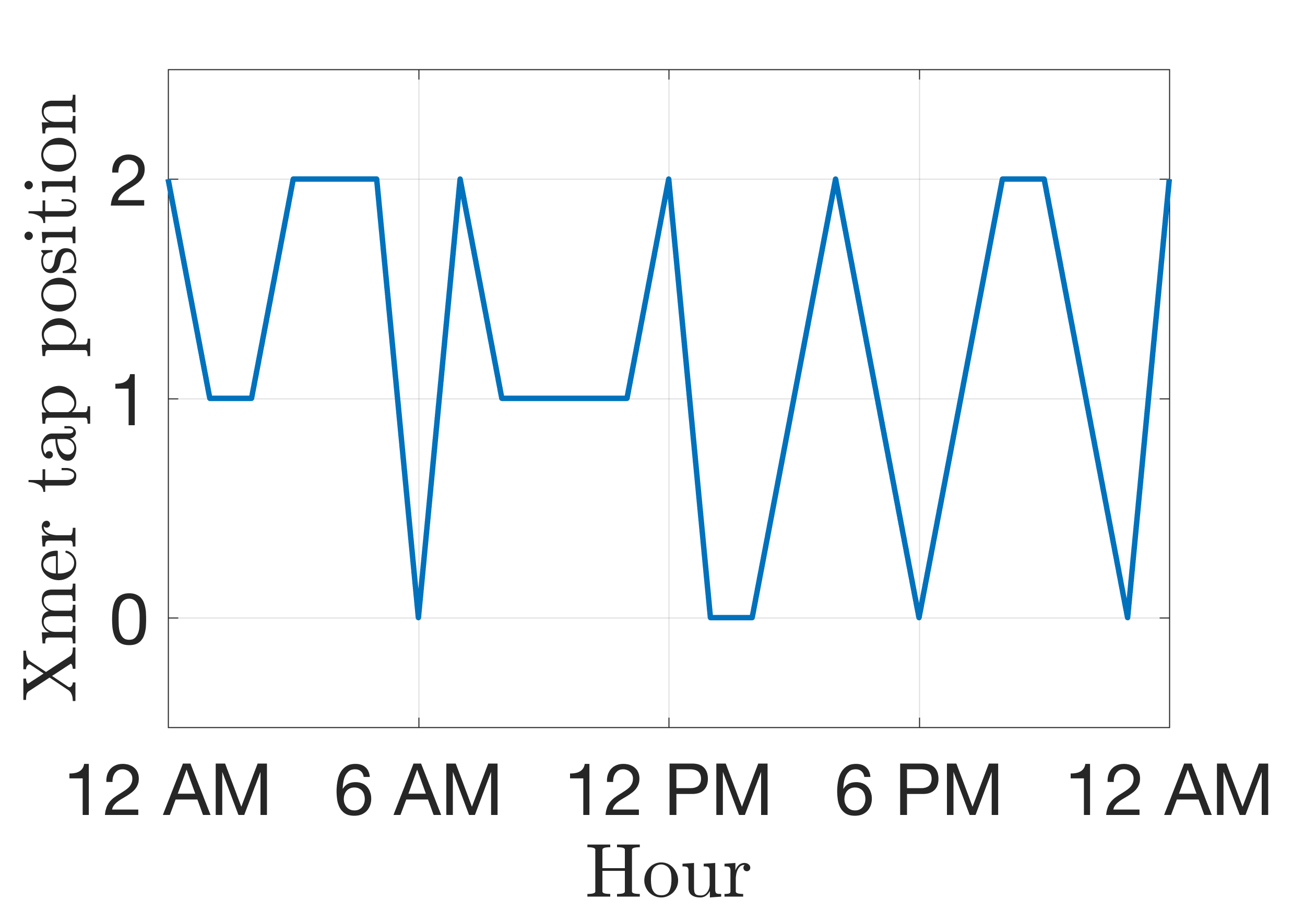}}
\caption{(a) Predicted 24-hour normalized load profile, (b) optimized schedule of aggregate reactive power from DERs utilized over the 24-hour horizon for IEEE-13 node system, (c) optimized schedule of the total reactive power from capacitor banks (at 1 p.u voltage) over the 24-hour horizon, (d) optimized OLTC tap position for IEEE-13 node system over the 24-hour horizon.}
  \label{fig4} 
\end{figure}



Fig.~\ref{fig:reac_comp_13node} shows the comparison between the reactive power supply between the case using capacitor banks and the case without the use of capacitor banks, showing that capacitor banks supply part of the reactive power reducing the burden on DERs. 
Fig.~\ref{fig:Vsl_comp_13node} shows the comparison between the voltage violation terms between the case using capacitor banks and the case without capacitor banks, showing that the voltage violations are very small and hence the system violations are within the specified tighter voltage bounds.
\begin{figure}[h]
\centering
\includegraphics[width=0.3\textwidth]{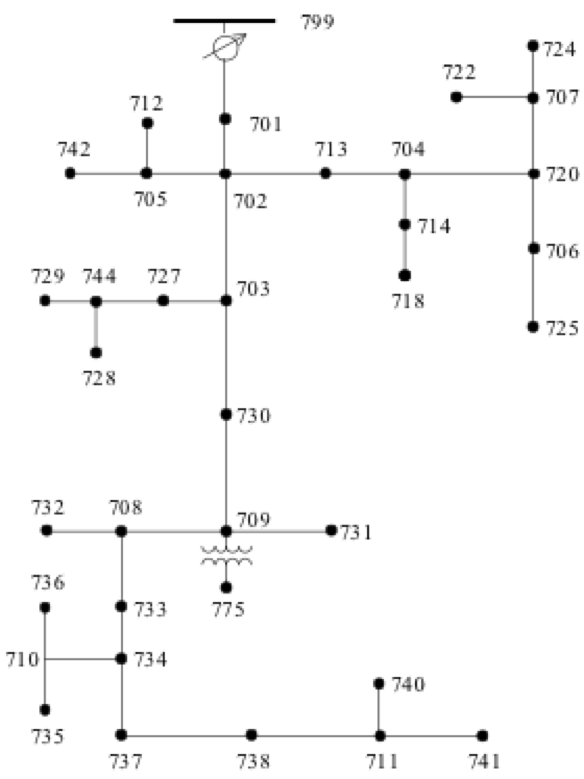}
\caption{\label{fig:IEEE_37node} IEEE-37 node distribution network~\cite{baker2018network}.}
\end{figure}

Further  simulations are conducted on IEEE-37 node  system shown in Fig.~\ref{fig:IEEE_37node}. In this case cap banks are positioned at nodes $724$, $725$, $728$, $732$, $736$ and $741$, whereas flexible DERs are placed at leaf nodes $714$, $731$, $734$, $744$ and $775$ and the load profile shown in Fig.~\ref{fig:load_profile} is used. For the IEEE-37 node system also, each capacitor bank operates with 10 increments with each increment being 10 kVAr and each DER has a range of -100 to +100 kVAr. Similar results are observed with regards to the reduction in reactive power utilization from flexible resources with the inclusion of capacitor banks as shown in Fig.~\ref{fig:reac_comp_37node}. Similarly, Fig.~\ref{fig:Vsl_comp_37node} shows the comparison of the voltage violation terms illustrating that the tighter voltage bounds are maintained. Further, Fig.~\ref{fig:xmer_profile_37} shows the optimal OLTC tap positions over the prediction horizon, whereas Fig.~\ref{fig:volt_profile_37} shows the comparison between voltages obtained from the optimzer (upper and lower bound) and Matpower at node 775 of the IEEE-37 node system over the prediction horizon.

Finally, Fig.~\ref{fig:solvetime_37} shows the increase in solve time (over one iteration) as the number of devices increases in the network. It can be seen that the convex formulation scales well with the increase in problem size.


\begin{figure}
    \centering
  \subfloat[\label{fig:reac_comp_13node}]{%
       \includegraphics[width=0.5\linewidth]{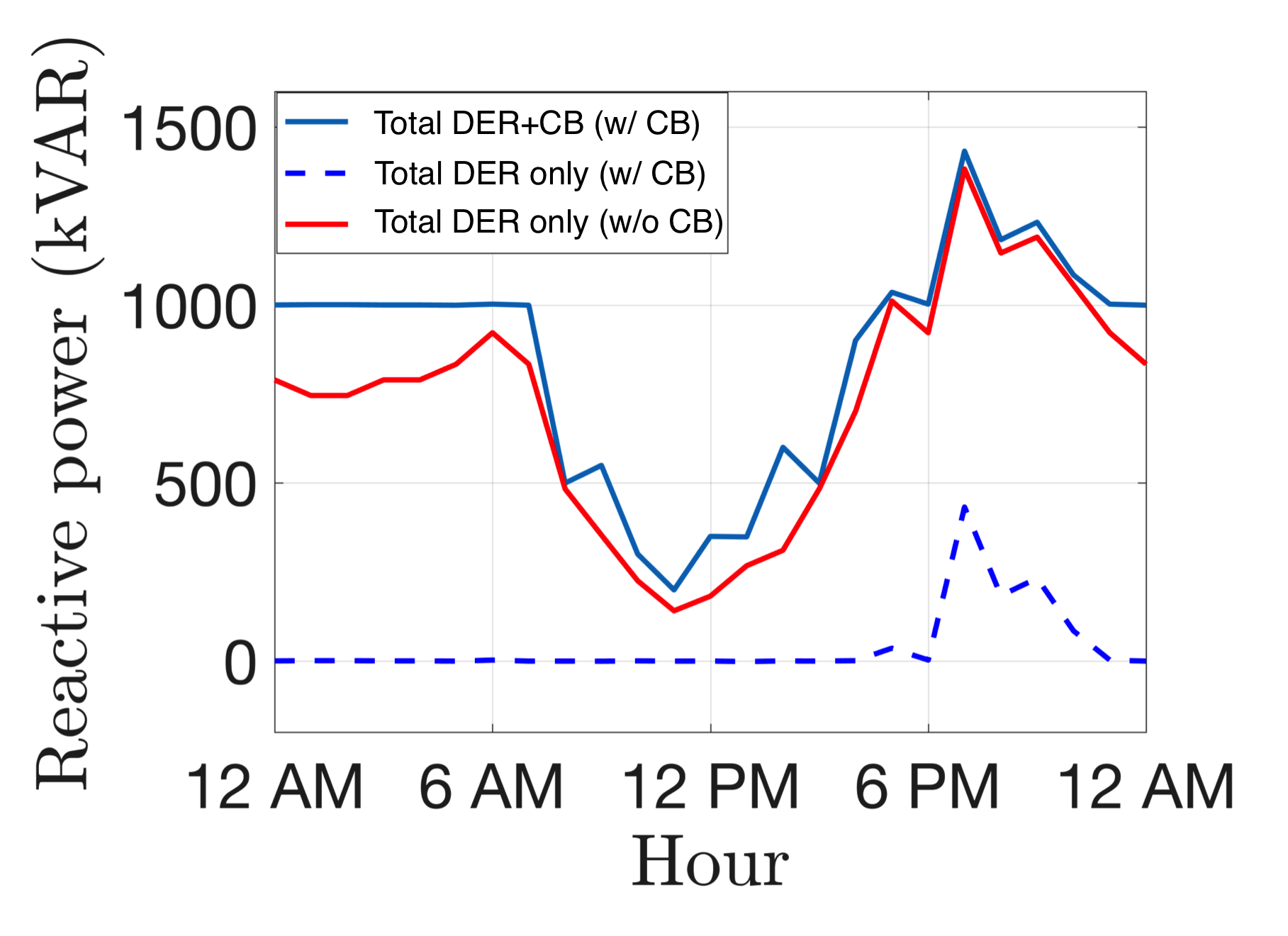}}
    \hfill
  \subfloat[\label{fig:Vsl_comp_13node}]{%
        \includegraphics[width=0.5\linewidth]{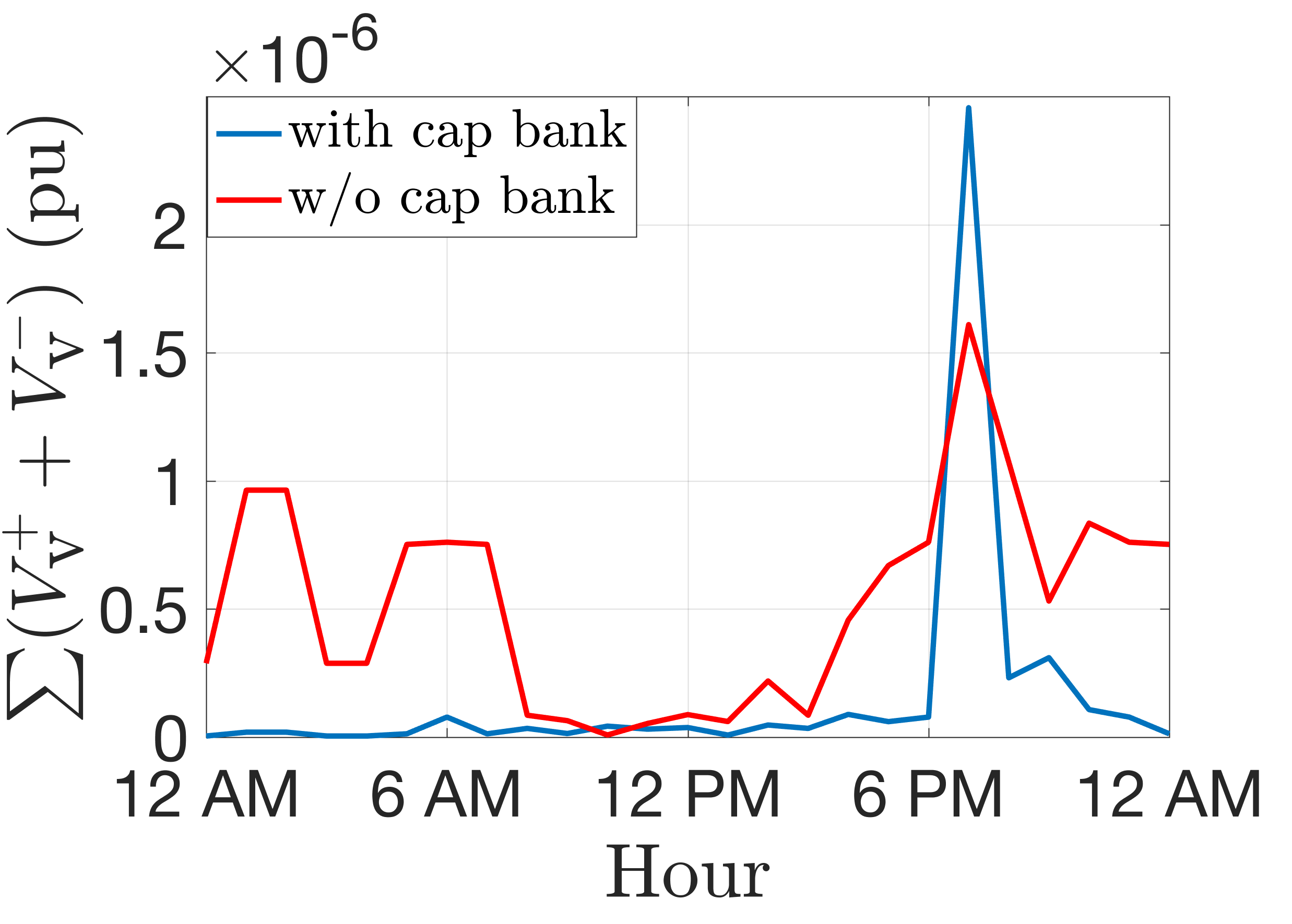}}

\caption{(a) Comparison of reactive power supply between the case using capacitor banks and without capacitor banks for IEEE-13 node system. The figure compares the total reactive power supply from DERs and cap banks (Total DER+CB (w/ CB)) with the reactive power supply only from DERs when cap banks are utilized (Total DER only (w/ CB)) and with DER reactive power supply when cap banks are not utilized (Total DER only (w/o CB)) , (b) Comparison of total voltage violation over time between the case using capacitor banks and without capacitor banks for IEEE-13 node system.}
  \label{fig5} 
\end{figure}

\begin{figure}
    \centering
  \subfloat[\label{fig:reac_comp_37node}]{%
       \includegraphics[width=0.5\linewidth]{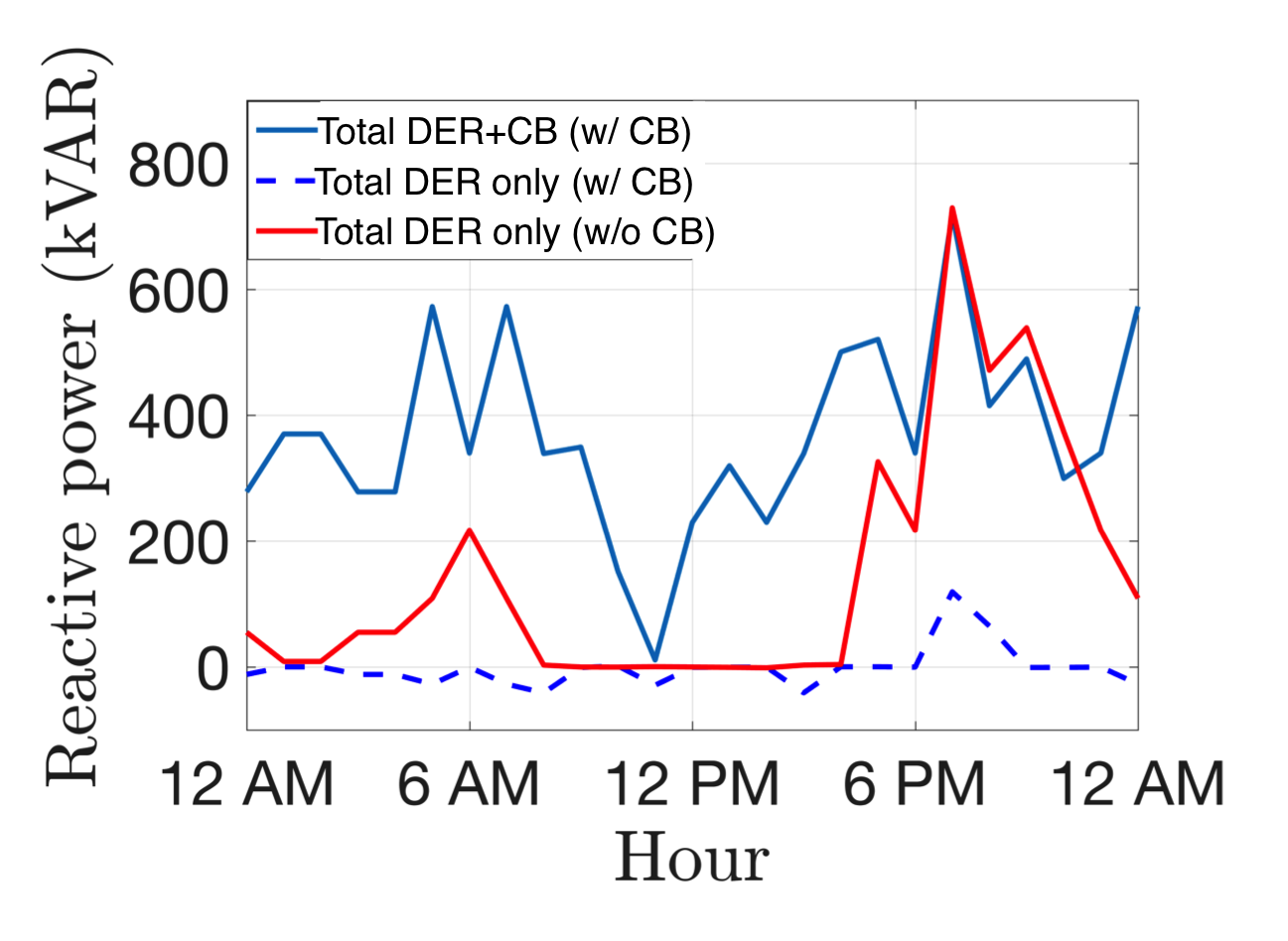}}
    \hfill
  \subfloat[\label{fig:Vsl_comp_37node}]{%
        \includegraphics[width=0.5\linewidth]{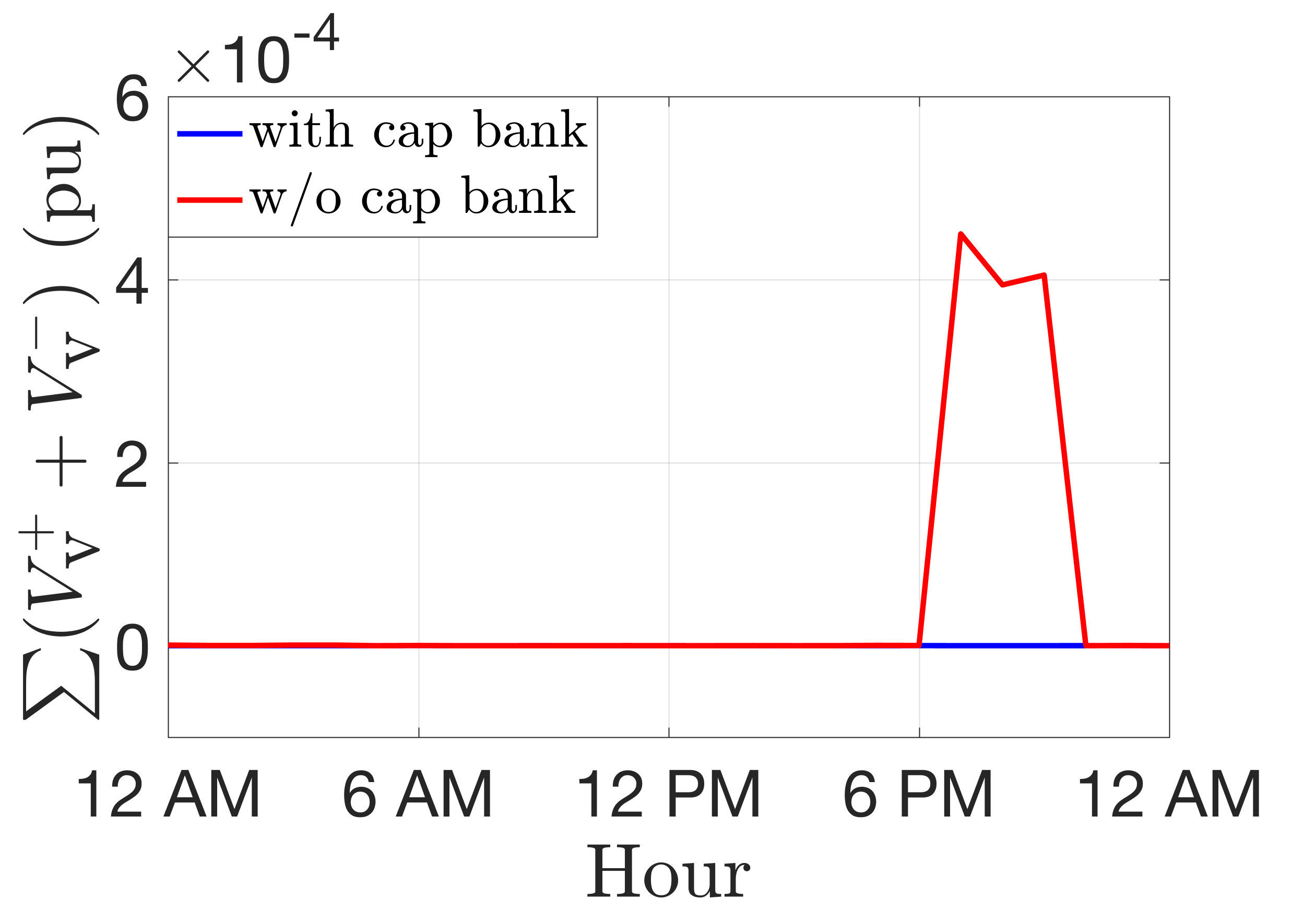}}

\caption{(a) Comparison of reactive power supply between the case using capacitor banks and without capacitor banks for IEEE-37 node system. The figure compares the total reactive power supply from DERs and cap banks (Total DER+CB (w/ CB)) with the reactive power supply only from DERs when cap banks are utilized (Total DER only (w/ CB)) and with DER reactive power supply when cap banks are not utilized (Total DER only (w/o CB)) , (b) Comparison of total voltage violation over time between the case using capacitor banks and without capacitor banks for IEEE-37 node system.}
  \label{fig6} 
\end{figure}



\begin{figure}
    \centering
    \subfloat[\label{fig:xmer_profile_37}]{%
        \includegraphics[width=0.5\linewidth]{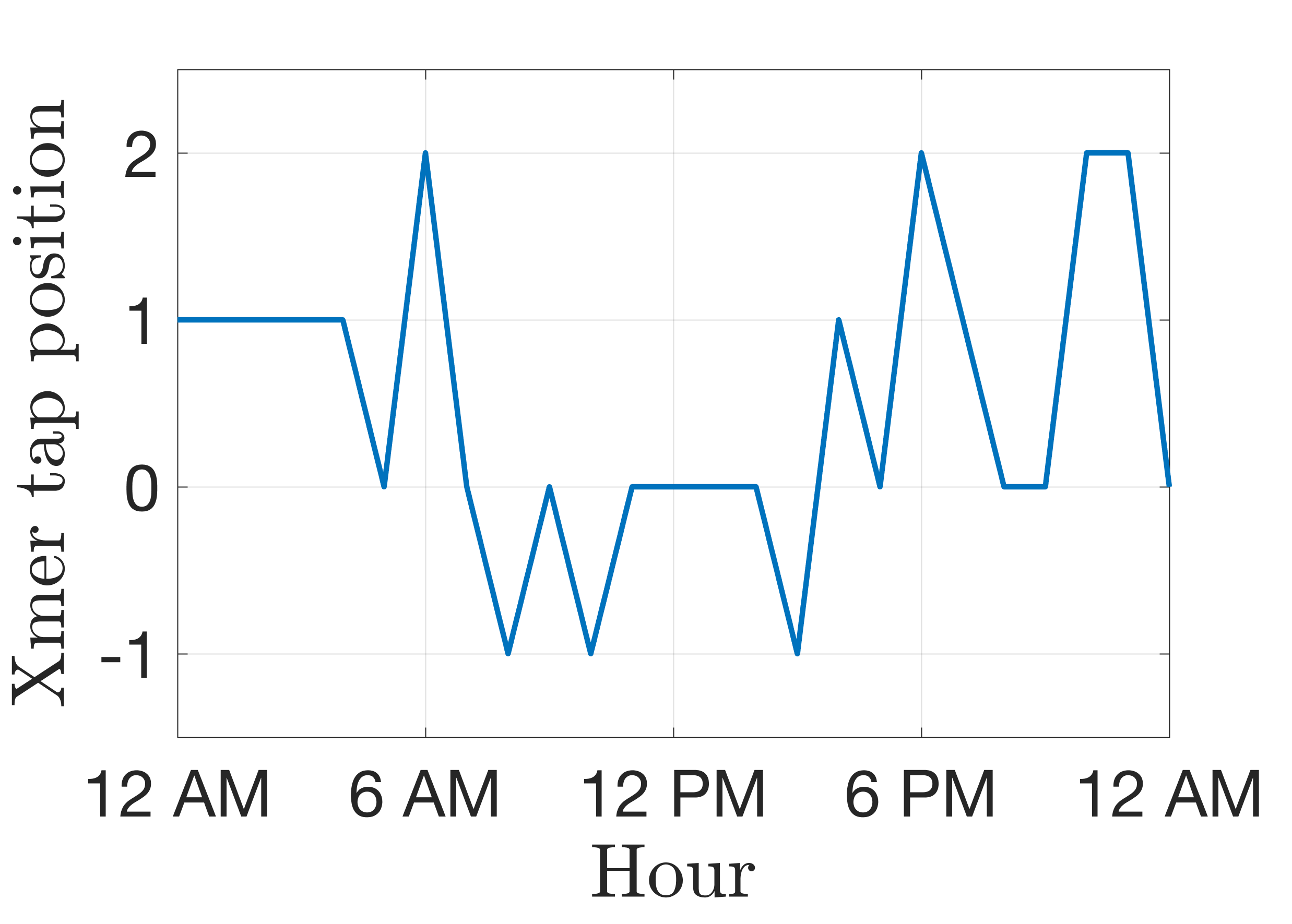}}
    \hfill
  \subfloat[\label{fig:volt_profile_37}]{%
       \includegraphics[width=0.5\linewidth]{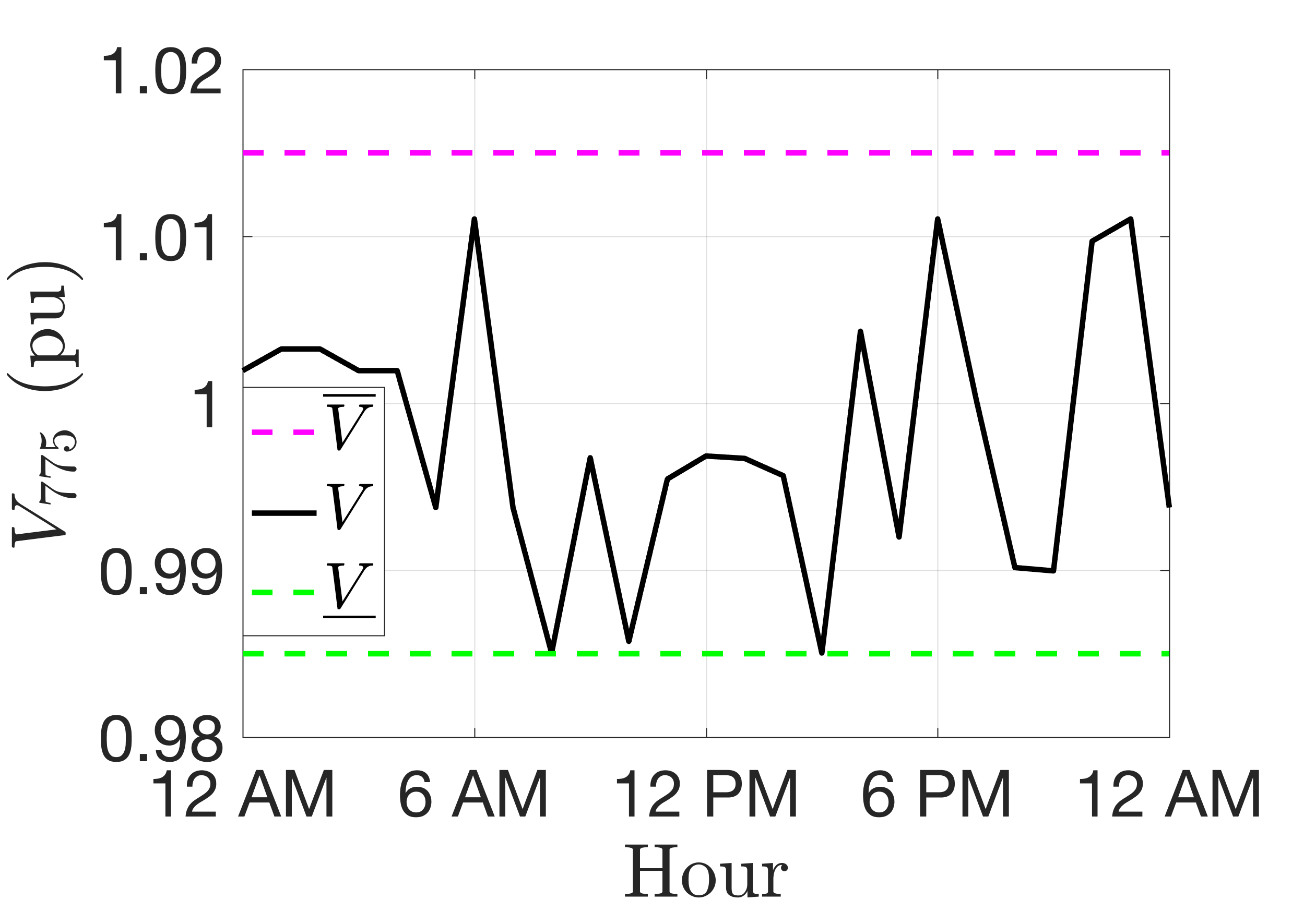}}

\caption{(a) Predicted OLTC tap position for IEEE-37 node system over the 24-hour horizon. (b) Comparison of the actual nodal voltage at node 775 with the upper and lower bounds over a 24-hour horizon for IEEE-37 node system }
  \label{fig7} 
\end{figure}
\begin{figure}[h]
\centering
\includegraphics[width=0.35\textwidth]{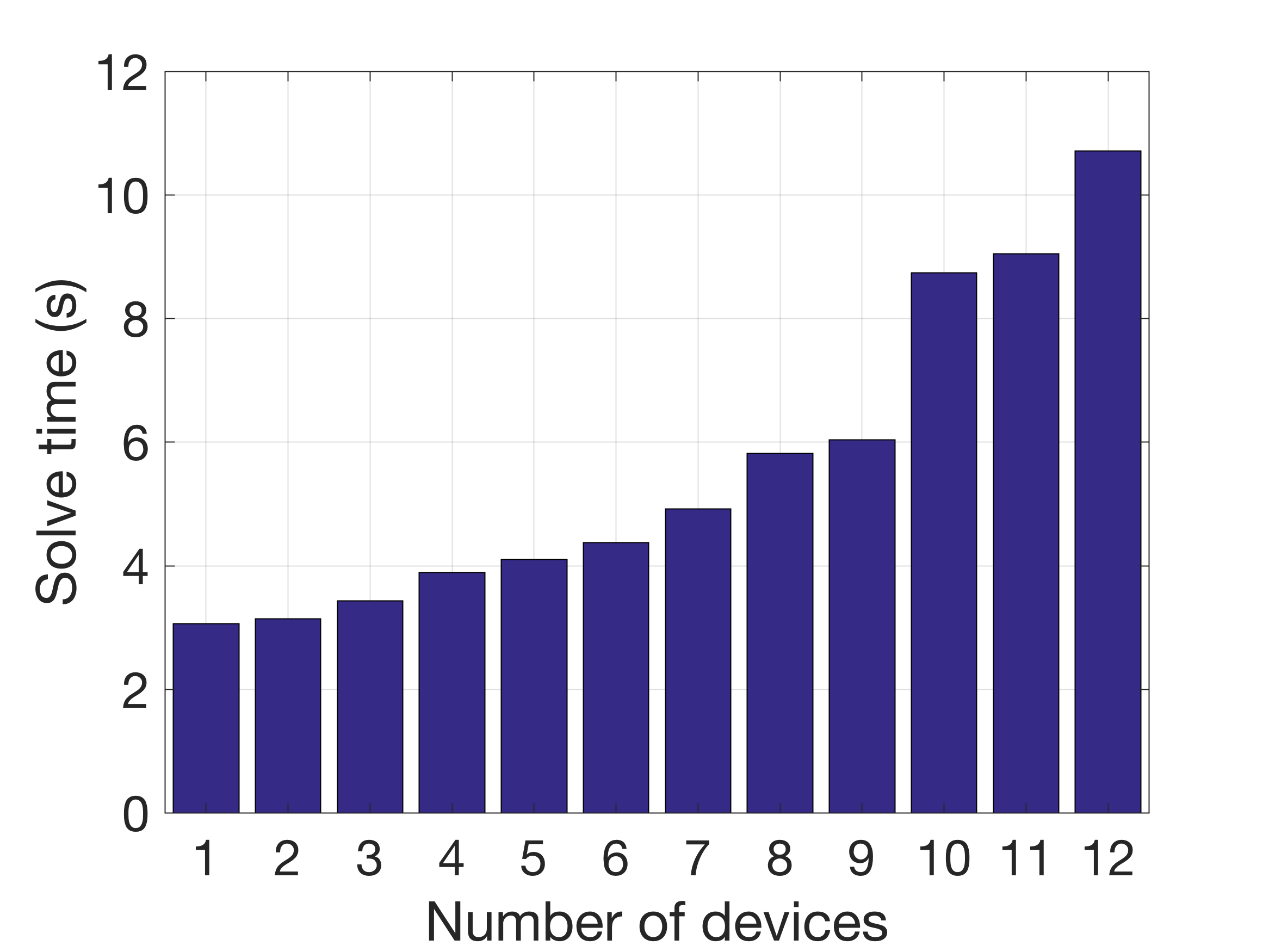}
\caption{\label{fig:solvetime_37} Solve time for IEEE-37 node system with number of devices (capacitor banks) in the network over one iteration.}
\end{figure}

Through these simulation results, it is observed that the voltage positioning algorithm results in a network admissible solution that prioritizes the utilization of mechanical assets over flexible resources to position the voltage close to nominal and hence could be utilized for voltage control of mechanical assets in distribution networks.

\section{Conclusions and future work}\label{sec:conclusion}
This paper introduces a holistic voltage positioning algorithm to optimally schedule mechanical switching devices, such as on-load tap changing transformers and capacitor banks, together with more responsive DERs in a distribution grid. The optimization program makes maximum use of mechanical resources to position the voltage close to nominal using tighter inner voltage bounds to counter the predicted hourly variation of renewable generation. At the same time, the scheduling of responsive reactive resources from DERs is reduced, making them available at the faster time-scale to counter fast minute-to-minute variation inherent to renewable generation. The optimization problem is formulated as a MILP through an convex inner approximation of the OPF ensuring network admissible solutions. 
 The optimization problem is validated via simulations on the IEEE-13 node and IEEE-37 node test feeders 
and the results are compared with AC load flows from Matpower. 
 The results validate the approach.

Future work will focus on the multi-period extension by considering the time-coupling introduced due to energy storage and ramp-rate limits. Of particular interest is the problem of restricting the frequent tap changes of OLTCs in distribution networks that has been previously highlighted in~\cite{kersulis2016renewable}. The VPO formulation is well-suited for limiting the switching of tap-changers by introducing tap change rate constraints in the multi-period optimization formulation.
Furthermore, we are seeking to extend the results to consider the effects of uncertainty in the VPO.

It is important to note that this current manuscript considers radial, balanced, and inductive distribution feeders. However, realistic distribution feeders are sometimes meshed, often unbalanced, and usually a mix of inductive and capacitative lines, which means that extending this work to a full, three-phase AC formulation is valuable towards utility practice. In the power systems literature, the challenges associated with the optimization of more general unbalanced systems has been widely discussed and represent challenging, open technical problems~\cite{nazir2018receding, stevenLow_unbal,nazir_inner, nando_unbal,Dan_convex_rest}. Thus, we are interested in leveraging these recent results to extend the convex inner approximation formulation to unbalanced and meshed networks in future work.

\bibliographystyle{IEEEtran}
\small\bibliography{fix.bib}

\begin{thebibliography}{10}
\providecommand{\url}[1]{#1}
\csname url@samestyle\endcsname
\providecommand{\newblock}{\relax}
\providecommand{\bibinfo}[2]{#2}
\providecommand{\BIBentrySTDinterwordspacing}{\spaceskip=0pt\relax}
\providecommand{\BIBentryALTinterwordstretchfactor}{4}
\providecommand{\BIBentryALTinterwordspacing}{\spaceskip=\fontdimen2\font plus
\BIBentryALTinterwordstretchfactor\fontdimen3\font minus
  \fontdimen4\font\relax}
\providecommand{\BIBforeignlanguage}[2]{{%
\expandafter\ifx\csname l@#1\endcsname\relax
\typeout{** WARNING: IEEEtran.bst: No hyphenation pattern has been}%
\typeout{** loaded for the language `#1'. Using the pattern for}%
\typeout{** the default language instead.}%
\else
\language=\csname l@#1\endcsname
\fi
#2}}
\providecommand{\BIBdecl}{\relax}
\BIBdecl

\bibitem{driesen2006distributed}
J.~Driesen and R.~Belmans, ``Distributed generation: Challenges and possible
  solutions,'' in \emph{Power Engineering Society General Meeting, 2006.
  IEEE}.\hskip 1em plus 0.5em minus 0.4em\relax IEEE, 2006, pp. 8--pp.

\bibitem{bravo2015distributed}
R.~J. Bravo, R.~Salas, T.~Bialek, and C.~Sun, ``Distributed energy resources
  challenges for utilities,'' in \emph{Photovoltaic Specialist Conference
  (PVSC), 2015 IEEE 42nd}.\hskip 1em plus 0.5em minus 0.4em\relax IEEE, 2015,
  pp. 1--5.

\bibitem{stetz2013improved}
T.~Stetz, F.~Marten, and M.~Braun, ``Improved low voltage grid-integration of
  photovoltaic systems in germany,'' \emph{IEEE Transactions on sustainable
  energy}, vol.~4, no.~2, pp. 534--542, 2013.

\bibitem{NERC2017}
{North American Electric Reliability Corporation}, ``1,200 mw fault induced
  solar photovoltaic resource interruption disturbance report,'' \emph{NERC},
  2017.

\bibitem{bayer2018german}
B.~Bayer, P.~Matschoss, H.~Thomas, and A.~Marian, ``The german experience with
  integrating photovoltaic systems into the low-voltage grids,''
  \emph{Renewable energy}, vol. 119, pp. 129--141, 2018.

\bibitem{guangya2015voltage}
Y.~Guangya, F.~Marra, M.~Juamperez, S.~B. Kj{\ae}r, S.~Hashemi,
  J.~{\O}stergaard, H.~H. Ipsen, and K.~H. Frederiksen, ``Voltage rise
  mitigation for solar pv integration at lv grids,'' \emph{Journal of Modern
  Power Systems and Clean Energy}, vol.~3, no.~3, pp. 411--421, 2015.

\bibitem{basso2004ieee}
T.~S. Basso and R.~DeBlasio, ``Ieee 1547 series of standards: interconnection
  issues,'' \emph{IEEE Transactions on Power Electronics}, vol.~19, no.~5, pp.
  1159--1162, 2004.

\bibitem{sanghi2003chemistry}
R.~Sanghi, ``Chemistry behind the life of a transformer,'' \emph{Resonance},
  vol.~8, no.~6, pp. 17--23, 2003.

\bibitem{kersulis2016renewable}
J.~A. Kersulis and I.~A. Hiskens, ``Renewable voltage regulation and the
  transformer tapping trade-off,'' in \emph{2016 IEEE Innovative Smart Grid
  Technologies-Asia (ISGT-Asia)}.\hskip 1em plus 0.5em minus 0.4em\relax IEEE,
  2016, pp. 960--965.

\bibitem{baran1989optimal}
M.~Baran and F.~F. Wu, ``Optimal sizing of capacitors placed on a radial
  distribution system,'' \emph{IEEE Transactions on power Delivery}, vol.~4,
  no.~1, pp. 735--743, 1989.

\bibitem{lavaei2012zero}
J.~Lavaei and S.~H. Low, ``Zero duality gap in optimal power flow problem,''
  \emph{IEEE Transactions on Power Systems}, vol.~27, no.~1, pp. 92--107, 2012.

\bibitem{lavaei2014geometry}
J.~Lavaei, D.~Tse, and B.~Zhang, ``Geometry of power flows and optimization in
  distribution networks,'' \emph{IEEE Transactions on Power Systems}, vol.~29,
  no.~2, pp. 572--583, 2014.

\bibitem{gan2015exact}
L.~Gan, N.~Li, U.~Topcu, and S.~H. Low, ``Exact convex relaxation of optimal
  power flow in radial networks,'' \emph{IEEE Transactions on Automatic
  Control}, vol.~60, no.~1, pp. 72--87, 2015.

\bibitem{huang2017sufficient}
S.~Huang, Q.~Wu, J.~Wang, and H.~Zhao, ``A sufficient condition on convex
  relaxation of ac optimal power flow in distribution networks,'' \emph{IEEE
  Transactions on Power Systems}, vol.~32, no.~2, pp. 1359--1368, 2017.

\bibitem{sankur2016linearized}
D.~B. Arnold, M.~Sankur, R.~Dobbe, K.~Brady, D.~S. Callaway, and A.~Von~Meier,
  ``Optimal dispatch of reactive power for voltage regulation and balancing in
  unbalanced distribution systems,'' in \emph{2016 IEEE Power and Energy
  Society General Meeting (PESGM)}.\hskip 1em plus 0.5em minus 0.4em\relax
  IEEE, 2016, pp. 1--5.

\bibitem{robbins2016optimal}
B.~A. Robbins and A.~D. Dom{\'\i}nguez-Garc{\'\i}a, ``Optimal reactive power
  dispatch for voltage regulation in unbalanced distribution systems,''
  \emph{IEEE Transactions on Power Systems}, vol.~31, no.~4, pp. 2903--2913,
  2016.

\bibitem{Zhu2016optimal}
B.~A. Robbins, H.~Zhu, and A.~D. Dom{\'\i}nguez-Garc{\'\i}a, ``Optimal tap
  setting of voltage regulation transformers in unbalanced distribution
  systems,'' \emph{IEEE Transactions on Power Systems}, vol.~31, no.~1, pp.
  256--267, 2016.

\bibitem{mccormick1976computability}
G.~P. McCormick, ``Computability of global solutions to factorable nonconvex
  programs: Part {I}—convex underestimating problems,'' \emph{Mathematical
  programming}, vol.~10, no.~1, pp. 147--175, 1976.

\bibitem{nazir2018receding}
N.~Nazir and M.~Almassalkhi, ``Receding-horizon optimization of unbalanced
  distribution systems with time-scale separation for discrete and continuous
  control devices,'' in \emph{2018 Power Systems Computation Conference
  (PSCC)}.\hskip 1em plus 0.5em minus 0.4em\relax IEEE, 2018, pp. 1--7.

\bibitem{briglia2017distribution}
E.~Briglia, S.~Alaggia, and F.~Paganini, ``Distribution network management
  based on optimal power flow: Integration of discrete decision variables,''
  \emph{Information Sciences and Systems (CISS), 51st Annual Conference on},
  pp. 1--6, 2017.

\bibitem{yang2017optimal}
Z.~Yang, A.~Bose, H.~Zhong, N.~Zhang, Q.~Xia, and C.~Kang, ``Optimal reactive
  power dispatch with accurately modeled discrete control devices: a successive
  linear approximation approach,'' \emph{IEEE Trans. Power Syst}, vol.~32,
  no.~3, pp. 2435--2444, 2017.

\bibitem{shukla2019}
S.~R. Shukla, S.~Paudyal, and M.~Almassalkhi, ``Efficient distribution system
  optimal power flow with discrete control of load tap changers,'' \emph{IEEE
  Transactions on Power Systems}, pp. 1--1, 2019.

\bibitem{bolognani2015distributed}
S.~Bolognani, R.~Carli, G.~Cavraro, and S.~Zampieri, ``Distributed reactive
  power feedback control for voltage regulation and loss minimization,''
  \emph{IEEE Transactions on Automatic Control}, vol.~60, no.~4, pp. 966--981,
  2015.

\bibitem{zhou2017incentive}
X.~Zhou, E.~Dall'Anese, L.~Chen, and K.~Baker, ``Incentive-based voltage
  regulation in distribution networks,'' in \emph{American Control Conference
  (ACC), 2017}.\hskip 1em plus 0.5em minus 0.4em\relax IEEE, 2017, pp.
  2732--2738.

\bibitem{baker2018network}
K.~Baker, A.~Bernstein, E.~Dall’Anese, and C.~Zhao, ``Network-cognizant
  voltage droop control for distribution grids,'' \emph{IEEE Transactions on
  Power Systems}, vol.~33, no.~2, pp. 2098--2108, 2018.

\bibitem{li2017non}
Q.~Li and V.~Vittal, ``Non-iterative enhanced sdp relaxations for optimal
  scheduling of distributed energy storage in distribution systems,''
  \emph{IEEE Transactions on Power Systems}, vol.~32, no.~3, pp. 1721--1732,
  2017.

\bibitem{li2018convex}
------, ``Convex hull of the quadratic branch ac power flow equations and its
  application in radial distribution networks,'' \emph{IEEE Transactions on
  Power Systems}, vol.~33, no.~1, pp. 839--850, 2018.

\bibitem{Dan_convex_rest}
D.~{Lee}, K.~{Turitsyn}, D.~K. {Molzahn}, and L.~{Roald}, ``Feasible path
  identification in optimal power flow with sequential convex restriction,''
  \emph{IEEE Transactions on Power Systems}, pp. 1--1, 2020.

\bibitem{wu2017distributed}
C.~Wu, G.~Hug, and S.~Kar, ``Distributed voltage regulation in distribution
  power grids: Utilizing the photovoltaics inverters,'' in \emph{American
  Control Conference (ACC), 2017}.\hskip 1em plus 0.5em minus 0.4em\relax IEEE,
  2017, pp. 2725--2731.

\bibitem{heidari2017non}
R.~Heidari, M.~M. Seron, and J.~H. Braslavsky, ``Non-local approximation of
  power flow equations with guaranteed error bounds,'' in \emph{Control
  Conference (ANZCC), 2017 Australian and New Zealand}, 2017, pp. 83--88.

\bibitem{fujimoto2004two}
T.~Fujimoto and R.~R. Ranade, ``Two characterizations of inverse-positive
  matrices: the hawkins-simon condition and the le chatelier-braun principle,''
  \emph{Electronic Journal of Linear Algebra}, vol.~11, no.~1, p.~6, 2004.

\bibitem{nazir2019convex}
N.~Nazir and M.~Almassalkhi, ``Convex inner approximation of the feeder hosting
  capacity limits on dispatchable demand,'' \emph{In: IEEE Conference on
  Decision and Control (to appear). Nice, France}, 2019.

\bibitem{boyd2004convex}
S.~Boyd and L.~Vandenberghe, \emph{Convex optimization}.\hskip 1em plus 0.5em
  minus 0.4em\relax Cambridge university press, 2004.

\bibitem{rosenlicht1986introduction}
M.~Rosenlicht, \emph{Introduction to analysis}.\hskip 1em plus 0.5em minus
  0.4em\relax Courier Corporation, 1986.

\bibitem{lubin2016extended}
M.~Lubin, E.~Yamangil, R.~Bent, and J.~P. Vielma, ``Extended formulations in
  mixed-integer convex programming,'' in \emph{International Conference on
  Integer Programming and Combinatorial Optimization}.\hskip 1em plus 0.5em
  minus 0.4em\relax Springer, 2016, pp. 102--113.

\bibitem{kersting2001radial}
W.~H. Kersting, ``Radial distribution test feeders,'' \emph{Power Engineering
  Society Winter Meeting}, vol.~2, pp. 908--912, 2001.

\bibitem{zimmerman2011matpower}
R.~D. Zimmerman, C.~E. Murillo-S{\'a}nchez, R.~J. Thomas \emph{et~al.},
  ``Matpower: Steady-state operations, planning, and analysis tools for power
  systems research and education,'' \emph{IEEE Transactions on power systems},
  vol.~26, no.~1, pp. 12--19, 2011.

\bibitem{gurobi}
\BIBentryALTinterwordspacing
L.~Gurobi~Optimization, ``Gurobi optimizer reference manual,'' 2018. [Online].
  Available: \url{http://www.gurobi.com}
\BIBentrySTDinterwordspacing

\bibitem{bank2013development}
J.~Bank and J.~Hambrick, ``Development of a high resolution, real time,
  distribution-level metering system and associated visualization, modeling,
  and data analysis functions,'' National Renewable Energy Lab.(NREL), Golden,
  CO (United States), Tech. Rep., 2013.

\bibitem{stevenLow_unbal}
F.~Zhou, Y.~Chen, and S.~H. Low, ``Sufficient conditions for exact semidefinite
  relaxation of optimal power flow in unbalanced multiphase radial networks,''
  \emph{arXiv preprint arXiv:1910.08258}, 2019.

\bibitem{nazir_inner}
N.~{Nazir}, P.~{Racherla}, and M.~{Almassalkhi}, ``Optimal multi-period
  dispatch of distributed energy resources in unbalanced distribution
  feeders,'' \emph{IEEE Transactions on Power Systems}, pp. 1--1, 2020.

\bibitem{nando_unbal}
L.~Gutierrez-Lagos, M.~Z. Liu, and L.~F. Ochoa, ``Implementable three-phase opf
  formulations for mv-lv distribution networks: Milp and miqcp,'' in \emph{2019
  IEEE PES Innovative Smart Grid Technologies Conference-Latin America (ISGT
  Latin America)}.\hskip 1em plus 0.5em minus 0.4em\relax IEEE, 2019, pp. 1--6.

\end{thebibliography}
\end{document}